\Crefname{equation}{Eq}{Eqs}
\newcommand{\C}{\mathcal{C}}
\newcommand{\M}{\mathcal{M}}
\newcommand{\PI}{\mathcal{P}}
\newcommand{\RR}{\mathbb{R}}
\newcommand{\ZZ}{\mathbb{Z}}
\let\svthefootnote\thefootnote
\newcommand\blankfootnote[1]{%
  \let\thefootnote\relax\footnotetext{#1}%
  \let\thefootnote\svthefootnote%
}
\newtheorem{theorem}{Theorem}[section]
\newtheorem{lemma}[theorem]{Lemma}
\newtheorem{proposition}[theorem]{Proposition}
\newtheorem{corollary}[theorem]{Corollary}
\theoremstyle{remark}
\newtheorem{definition}[theorem]{Definition}
\newtheorem{notation}[theorem]{Notation}
\def\RR{{\mathbb R}}
\def\NN{{\mathbb N}}
\def\A{{\mathcal A}}
\def\C{{\mathcal C}}
\def\F{{\mathcal F}}
\def\K{{\mathcal K}}
\def\M{{\mathcal M}}
\def\PI{{\mathcal P}}
\def\RC{{\mathcal R}}
\def\SC{{\mathcal S}}
\def\W{{\mathcal W}}
\def\ZZ{{\mathbb Z}}
\newcommand{\BAR}{\operatorname{BAR}}
\newcommand{\Dom}{\operatorname{Dom}}
\newcommand{\Ord}{\operatorname{Ord}}
\newcommand{\Pred}{\operatorname{Pr}}
\newcommand{\proj}{\operatorname{proj}}
\newcommand{\Req}{R_{\text{eq}}}
\newcommand{\overbar}[1]{\mkern 1.5mu\overline{\mkern-1.5mu#1\mkern-1.5mu}\mkern 1.5mu}
\newcolumntype{C}[1]{>{\centering\let\newline\\\arraybackslash\hspace{0pt}}p{#1}}
\def\namedlabel#1#2{\begingroup
   \def\@currentlabel{#2}%
   \label{#1}\endgroup
}
\title{\LARGE Model comparison via simplicial complexes and persistent homology}
\author{\normalsize Sean T. Vittadello and Michael P. H. Stumpf}
\title{\LARGE Model comparison via simplicial complexes and persistent homology}
\author{\normalsize Sean T. Vittadello\footnote{School of BioSciences, The University of Melbourne, Parkville VIC 3010 Australia\label{VS}}\textsuperscript{,}\renewcommand*{\thefootnote}{\fnsymbol{footnote}}\footnote[1]{Corresponding author: sean.vittadello@unimelb.edu.au}\renewcommand*{\thefootnote}{\arabic{footnote}} and Michael P. H. Stumpf\textsuperscript{\ref{VS}}\textsuperscript{,}\footnote{School of Mathematics and Statistics, The University of Melbourne, Parkville VIC 3010 Australia.}}
\begin{document}
\captionsetup[figure]{name={Figure}}
\makeatletter
\begin{titlepage}
\thispagestyle{specialfooter}
\blankfootnote{\textbf{Key words and phrases}: Turing pattern, positional information, algebraic topology, algebraic biology, persistent homology, persistence barcode, model distance, model equivalence}
\centering
\@title\\
\vspace{1.5cm}
\@author\\
\vspace{1.5cm}
\cleanlookdateon
\vspace{0mm}
\begin{abstract}
\begin{normalsize}\noindent
In many scientific and technological contexts we have only a poor understanding of the structure and details of appropriate mathematical models. We often, therefore, need to compare different models. With available data we can use formal statistical model selection to compare and contrast the ability of different mathematical models to describe such data. There is, however, a lack of rigorous methods to compare different models \emph{a priori}. Here we develop and illustrate two such approaches that allow us to compare model structures in a systematic way {by representing models in terms of simplicial complexes}. Using well-developed concepts from simplicial algebraic topology, we define a distance between models based on their simplicial representations. Employing persistent homology with a flat filtration provides for alternative representations of the models as persistence intervals, which represent the structure of the models, from which we can also obtain the distances between models. We then expand on this measure of model distance to study the concept of model equivalence in order to determine the conceptual similarity of models. We apply our methodology for model comparison to demonstrate an equivalence between a positional-information model and a Turing-pattern model from developmental biology, constituting a novel observation for two classes of models that were previously regarded as unrelated.
\end{normalsize}
\end{abstract}
\end{titlepage}
\makeatother

\newpage

\section{Introduction}
Scientific models are representations of the physical world that isolate features of interest through various levels of abstraction \cite{Rosenblueth1945}. The complexity of biological phenomena, exemplified in systems biology, necessitates the development of models to further the understanding of these systems \cite{Tomlin2007,Sneddon2010,Pezzulo2016,Transtrum2016}.

There are many approaches to modelling biological systems, including continuum models \cite{Turing1952,Wolpert1969,Gierer1972}, rule-based models \cite{Hlavacek2006,Danos2007,Bachman2011}, network models \cite{Wang2012,Wolkenhauer2014,Tavassoly2018}, and mechanistic models \cite{Transtrum2016,Stalidzans2020}, which are not necessarily mutually exclusive classifications. While a model should represent the corresponding phenomenon as faithfully as possible, it is also a requirement that the model is manageable. It is neither practicable to develop a complete model of a biological system, due to \emph{combinatorial complexity} \cite{Danos2007,Sneddon2010,Le_Novere2015}, nor is it necessary, since many components of a given system have relatively small effects on the system behaviour \cite{Pezzulo2016,Erguler2011}.

A biological system may be represented by various alternative models, which can differ with respect to the specific features isolated by each model and the interconnections between these features. Contrarily, similar models may represent ostensibly divergent biological systems, where the difference between the models may be only conceptual or, indeed, simply different parameterisations \cite{Ingram2006}. The ability to rigorously and efficiently compare models is therefore imperative for understanding complex biological systems, and can elucidate the relationship between apparently distinct phenomena \cite{Gay2010,Schulz2011,Kirk2013,Clark2014,Henkel2018}. Model comparison is, however, a nontrivial task for multiple reasons: the various representational forms of models; the different levels of abstraction and granularity between models; and the need for a systematic formalism that provides elucidation of conceptual similarities between models. In particular, we require a way of obtaining distances between models: with a meaningful distance between models we can start to cluster `similar' models and look for shared characteristics; we can explore how `different' proposed alternative models for a given process really are; and we can use them to distill design principles from large sets of models that have been proposed to model a given biological process or problem.

There are very few attempts to establish distances between models \cite{Ettinger2002,Deza2014}, and many of the candidates for a distance between models have obvious shortcomings: graph-based distances on model structures, for example, fail because many dynamical systems cannot be described in terms of simple graphs \cite{Ingram2006}; other well-defined distances used in functional analysis are too limited in scope, as are distances among e.g. stoichiometric matrices, which only apply for certain types of models; semantic distances between model formulations in e.g. CellML or SMBL, can depend too strongly on inconsistencies in modelling terminology, and also can fail to distinguish between structural and semantic differences; distances among model outputs such as information-theoretical distances, distances used in control theory, or correlation, (dis)similarity measures require assessment of model outputs and generally do not allow a direct structural comparison between models; finally, coding distances or the minimum description length would be promising candidates for distances between models. We could consider these, but they, too, have shortcomings, especially for different modelling frameworks (say ordinary differential equations or stochastic Petri-nets used to model the same scenario \cite{Toni2011}).

Therefore, in this article we present a novel formalism for model comparison that is flexible, systematic, quantitative, and visual. Since models consist of a finite collection of components and their interconnections, we represent a given model as a combinatorial object where the components of the model are represented as labelled vertices. Further, since the interconnections between  model components may have dimension higher than the dyadic connections in combinatorial graphs, we employ simplicial complexes to represent the models which allow for interconnections of any finite dimension. Representing models as simplicial complexes ensures that any model can be represented within our framework, irrespective of form, granularity, and level of abstraction. We have to define model components -- for this we may draw on domain expertise relevant to the scientific modelling problem at hand -- and model comparison is then performed by comparison of the simplicial representations of the models given these components. Our framework is both flexible, incorporating different models and modelling approaches, and rigorous. 

Given the model components, the simplicial representations provide a graphical form for the models; however we can also obtain visual representations of the models as collections of persistence intervals by applying persistent homology to the simplicial complexes. Persistent homology identifies the global topological structure of a space that persists across multiple scales \cite{Edelsbrunner2002,Zomorodian2005}, and is often employed in topological data analysis to study high-dimensional data sets \cite{Otter2017}. Persistent homology is robust with respect to small perturbations in the input data, and represents the qualitative structure of the input data in a compact manner \cite{Otter2017}. While we use tools that are also employed in topological data analysis, our aims here are different: we are trying to represent models in a framework that allows for meaningful comparisons, rather than looking for topological structure in high-dimensional data. Our application of persistent homology is novel since our simplicial complexes correspond to models rather than data sets, and we utilise the persistence intervals associated with a simplicial complex as an alternative representation of the model and hence the corresponding biological system. The persistence-interval representations tend to facilitate the visual comparison between various models, which may be more difficult with the simplicial representations and particularly with the models in their original forms. Indeed, since three- and higher-dimensional simplices cannot be drawn in two dimensions, only projected, representations as persistence barcodes provide immediate information for all dimensions in an often clearer visual form.

We provide two general methodologies for model comparison. The first is comparison by distance, which measures the difference between models in terms of the differences between the corresponding labelled simplicial representations, and can be calculated either directly using the simplicial complexes or indirectly with the corresponding persistence intervals. The second is comparison by equivalence, where we identify any equivalences between the components of the simplicial representations of different models, and then employ operations on the complexes to transform one into the other. Where this is possible we can learn about shared underlying characteristics between models. 

As a particular application of our methodology we compare the two main classes of models for developmental-pattern formation, namely Turing-pattern and positional-information models. While there is an extensive literature for models of both Turing-pattern and positional-information systems, the relationship between these models, and between the corresponding biological systems, has long been unclear \cite{Green2015}. One main outcome of our model-comparison methodology is the demonstration that the Turing-pattern activator-inhibitor model is equivalent to the positional-information annihilation model from a significant conceptual perspective, where the fundamental difference between the two models is the location of the source of the gradient-forming morphogen. This novel observation suggests that the location of the morphogen source may influence the particular mechanism, namely Turing-pattern or positional information, for gradient formation.

The remainder of this article is organised as follows. In Section~\ref{sec:Methodology} we introduce our methodology for model comparison, which consists of five subsections: Subsection~\ref{subsec:Background} provides a brief discussion of the required background in algebraic topology; in Subsection~\ref{subsec:Representations} we define our notion of a simplicial representation of a model; in Subsection~\ref{subsec:Homology} we develop the persistent homology of the simplicial representations of models; Subsection~\ref{subsec:Distance} provides the definition of distance between two simplicial representations of models; and, in Subsection~\ref{subsec:Equivalence} we establish our notion of model equivalence. We then apply our model-comparison methodology to two examples in Section~\ref{sec:Discussion}, firstly a comparison of bisubstrate enzyme reactions, and secondly a comparison of Turing-pattern and positional-information models. Finally, in Section~\ref{sec:Conclusion} we summarise the utility of our methodology for comparing models.

\section{Methodology} \label{sec:Methodology}
\subsection{Background} \label{subsec:Background}
We begin with a brief, informal, and self-contained discussion of the background in simplicial algebraic topology that is relevant for our work. A more detailed overview of simplicial complexes and homology is provided in the Electronic Supplementary Material document. { There is also a growing literature on related issues in topological data analysis \cite{Otter2017}, but there the aims and the details of implementation differ considerably from what we set out to achieve here.} 

A \emph{$p$-simplex} is a generalisation of a filled triangle in the plane to an arbitrary dimension $p$, whereby a point is a 0-simplex, a line segment is a 1-simplex, a filled triangle is a 2-simplex, a filled tetrahedron is a 3-simplex, and so forth. We can think of 0-simplices as vertices and 1-simplices as edges, so that a simple graph is therefore a set of 0-simplices and 1-simplices. A \emph{$k$-face} of a simplex is a $k$-dimensional subsimplex, and the 0-faces of a simplex are said to \emph{span} the simplex. A face $\tau$ of a simplex $\sigma$ is \emph{proper} if $\tau \ne \sigma$.

A \emph{simplicial complex} is a generalisation of a simple graph, allowing for simplicies of dimension higher than one which represent higher-dimensional interactions. Specifically, a simplicial complex $K$ consists of a set of simplices such that: if $\sigma$ is a simplex in $K$ then every face of $\sigma$ is also in $K$; and, the nonempty intersection of any two simplices in $K$ is a simplex in $K$. A \emph{simplicial subcomplex} $L$ of a simplicial complex $K$ is a collection $L \subseteq K$ that is also a simplicial complex, and a \emph{simplicial supercomplex} $M$ of $K$ is a collection $M \supseteq K$ that is also a simplicial complex. The \emph{$k$-skeleton} of a simplicial complex $K$ is the subcomplex $K^{(k)}$ consisting of the simplices in $K$ with dimension at most $k$. In particular, the 0-skeleton $K^{(0)}$ is the set of vertices and the 1-skeleton $K^{(1)}$ is the \emph{underlying graph} of the simplicial complex $K$.

A simplicial complex $K$ is a topological space, formed from the union of its simplices, and simplicial homology characterises the complex using algebraic techniques to compute the number of $k$-dimensional holes in the complex. So, for example, 0-dimensional holes are connected components and 1-dimensional holes are nonbounding cycles of edges. The homology of $K$ depends on the underlying space of simplices and their intersections in $K$. Persistent homology studies the topological features, namely the $k$-dimensional holes, of a complex $K$ across multiple scales, based on a \emph{filtration} of a simplicial complex which is an increasing sequence of subcomplexes. Each $k$-dimensional hole in $K$ is created at some index in the filtration, and either persists through to the full complex $K$ or is annihilated at some intermediate index. Persistent homology therefore gives a multiset of \emph{persistence intervals} $\PI(K)$ describing the creation and annihilation of all $k$-dimensional holes in $K$. We denote by $\PI_k (K)$ the submultiset of $\PI(K)$ consisting of the intervals corresponding to $k$-dimensional holes. The persistence intervals can be visualised as a \emph{persistence barcode}, displaying the intervals as horizontal line segments.

\subsection{Simplicial representations of models} \label{subsec:Representations}
By a \emph{model} we mean an abstraction of an observable phenomenon \cite{Rosenblueth1945}. A specific detail of a given model is a \emph{component} of the model, and all models consist of a finite number of components and their interconnections, which represent direct relationships between specific components. For example, a Turing-pattern system of reaction-diffusion equations, which is a model of a developmental-patterning process, includes components such as morphogens, diffusion, boundary conditions, reactions, and a morphogen gradient, along with the interconnections between particular components such as a morphogen and its diffusion.

In comparing models, including when defining a distance between models, it is important to ensure that model components are defined consistently across all models. Beyond this, the definition of model components typically arises naturally from the scientific-modelling context. It is essential to keep in mind that model comparisons are always with regard to the chosen level of conceptual detail used to represent the models. The intention of a simplicial representation of a model is to identify the concepts underlying the model, and their interconnections, thereby removing the formality of the mathematical representations to allow for the comparison of models within the same framework.

We begin by defining the set of model components:

\begin{definition}[\bf{Model components, generated model}]\label{def:Model components}
Let $\C$ be the set of all components that appear in the collection of models under consideration for comparison. We say that each such model is \emph{generated} by a subset of components from $\C$.
\end{definition}

\begin{notation}
Denote by $\NN^{\ast}$ the set of positive integers, and by $\overline{\NN^{\ast}}$ the set of extended positive integers $\NN^{\ast} \cup \{ + \infty \}$. For $n \in \NN^{\ast}$ we denote the subset $\{\, m \in \NN^{\ast} \mid m \le n \,\}$ of $\NN^{\ast}$ as $[n]$.
\end{notation}

We represent model components as 0-simplices, and label the 0-simplices in a flexible manner to allow for more efficient or detailed labelling as required:

\begin{notation}[\bf{Representations of model components}]\label{not:Model components}
Let $\Ord \colon \C \to \big[|\C|\big]$ be a bijection that specifies an arbitrary order for the categorical data elements in $\C$. We represent each model component from $\C$ as a 0-simplex which is labelled variously as $i$, $v_i$ in recognition that 0-simplices are vertices, or the name of the model component given by $\Ord^{-1}(i)$, where $i$ is the position in the total order specified by $\Ord$.
\end{notation}

The ordering function $\Ord$ provides for computationally-efficient labelling of simplices. For example, a 2-simplex can be labelled as $\{1, 2, 3\}$, where the 0-simplices are $\{1\}$, $\{2\}$, and $\{3\}$, and the 1-simplices are $\{1,2\}$, $\{1,3\}$, and $\{2,3\}$. Note that we have no need for the orientation of the simplicial complexes that is provided by the total ordering of the vertices by $\Ord$, since we only consider simplicial homology over $\ZZ / 2\ZZ$.

To a given model we can associate various related simplicial complexes depending on the required levels of abstraction and component detail. Each associated simplicial complex consists of the relevant labelled 0-simplices that represent the model components, along with the one- and higher-dimensional simplices that represent the interconnections between the components, where the dyadic interactions are 1-simplices, the triadic interactions are 2-simplices, and so forth as required. The labelling of the 0-simplices induces a labelling of the higher-dimensional simplices through their spanning 0-simplices. Since the models of interest are associated with the same set of components $\C$, we can identify a particular labelled simplex, representing specific components and interconnections, within the different simplicial complexes associated with the models.

A general algorithm to associate a labelled simplicial complex $K$ with a model is as follows:
\begin{enumerate}[itemsep=5pt,topsep=5pt]
\item Identify the agents in the model. \label{enum:alg1}
\item Identify the reactions and interactions of the agents in the model, which may provide for various levels of conceptual detail. \label{enum:alg2}
\item Identify any boundary conditions for the model. \label{enum:alg3} 
\item Identify, if desired, the parameters in the model, which may allow for various levels of detail. \label{enum:alg4}
\item Define the set $\C$ as consisting of the components identified in steps \ref{enum:alg1}--\ref{enum:alg4}. \label{enum:alg5}
\item The simplicial representation $K$ has $\lvert \C \rvert$ vertices with the labels from $\C$. \label{enum:alg6}
\item The 1-simplices of $K$ correspond to direct connections between the relevant concepts, which are immediate from the model. To determine whether there should be a 1-simplex linking two vertices it may be easier to consider whether the absence of the 1-simplex is consistent with the model structure. \label{enum:alg7}
\item Higher-dimensional simplices may or may not be required, depending on the particular model and the desired level of representation. Any higher-dimensional simplices must represent mutual interconnections between all model concepts involved. \label{enum:alg8}
\end{enumerate}
\noindent With regard to steps \ref{enum:alg7}--\ref{enum:alg8} we note that, while all one- and higher-dimensional simplices of $K$ are completely determined by the model, such interconnections between the model components may not be obvious and may require a deep understanding of the model.

As an example we consider the Lotka-Volterra model \cite[Chapter 3, page 79]{Murray2002}, which is described by
\begin{equation}\label{eq:LV}
\frac{\mathrm{d}x}{\mathrm{d}t} = x(a-by) \quad \text{and} \quad \frac{\mathrm{d}y}{\mathrm{d}t} = y(cx-d),
\end{equation}
where $x$ is the prey density, $y$ is the predator density, and $a$, $b$, $c$, and $d$ are positive real constants. The underlying assumptions of the model are: in the absence of predation the prey population grows without bound; the reduction of the prey's per capita growth rate due to predation is proportional to both the prey and predator populations; in the absence of prey the predator's death rate decays exponentially; and, the prey's contribution to the predator's growth rate is proportional to both the available prey and the size of the predator population. In Figure~\ref{fig:Fig1} we provide a simplicial representation of the Lotka-Volterra model.
\begin{figure}[!h]
\centering\includegraphics[width=0.6\textwidth]{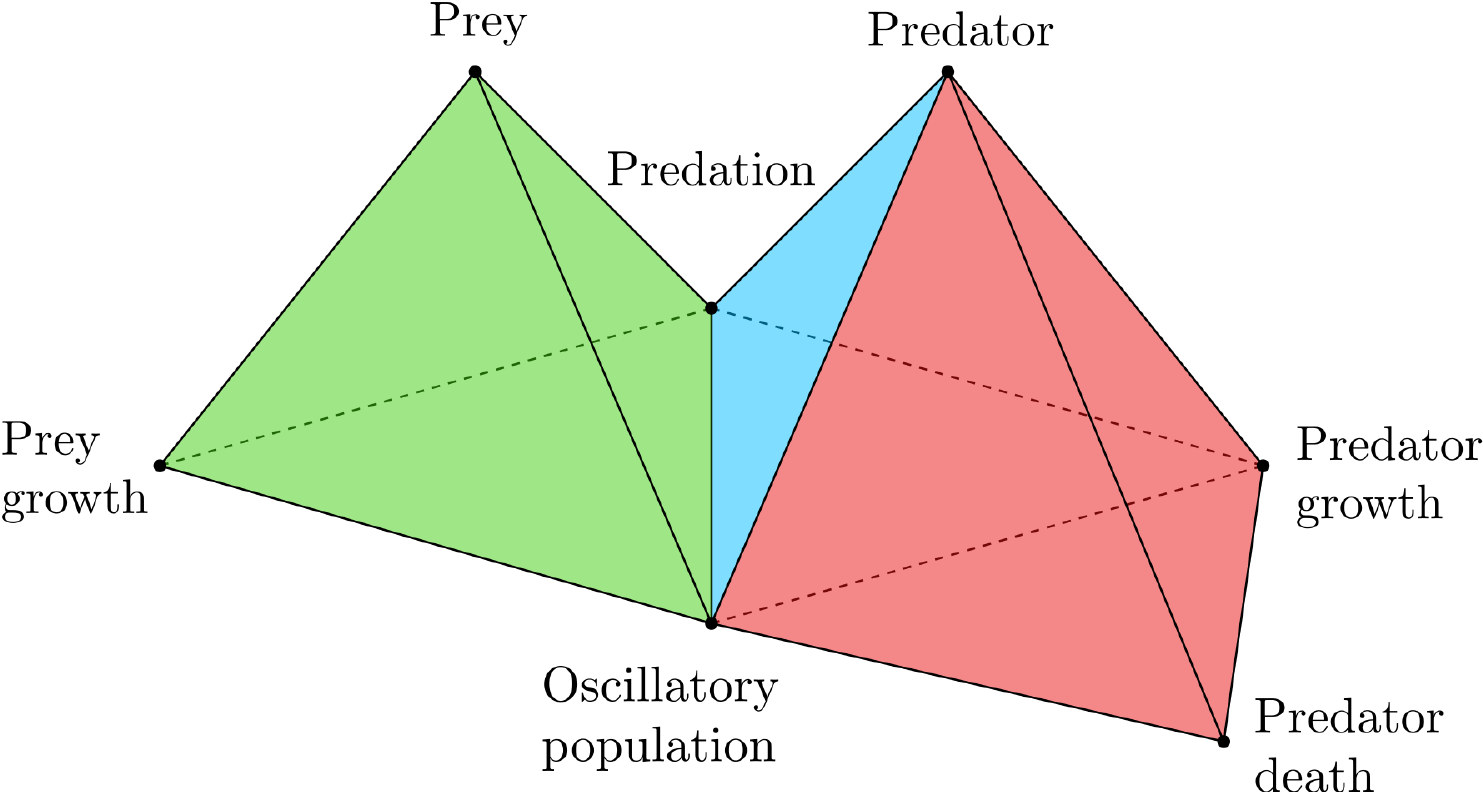}
\caption{{\bf Simplicial representation of the Lotka-Volterra model.} The representation contains higher-order simplices which capture the interactions between the components of the mathematical model. The vertices labelled `Prey', `Prey growth', `Predation', and `Oscillatory population' span a 3-simplex; the vertices labelled `Predator', `Predator growth', `Predation', and `Oscillatory population' span a 3-simplex; and the vertices labelled `Predator', `Predator growth', `Predator death', and `Oscillatory population' span a 3-simplex.}
\label{fig:Fig1}
\end{figure}
The simplicial representation accounts for the underlying concepts of the model rather than the particular mathematical form of the model, as we want to avoid the specific details associated with any particular mathematical representation to allow for direct comparison with other models.

To arrive at the simplicial representation in Figure~\ref{fig:Fig1} we identify the underlying concepts of Equation~\ref{eq:LV}: the agents are `Prey' and `Predator', corresponding to the population variables $x$ and $y$, respectively; the terms $ax$ and $-bxy$ on the right side of the prey equation correspond to `Prey growth' and `Predation', respectively; the terms $cxy$ and $-dy$ on the right side of the predator equation correspond to `Predator growth' and `Predator death', respectively; finally, the solutions of Equation~\ref{eq:LV} exhibit oscillatory dynamics for both populations, which we identify as `Oscillatory population' in each case. The vertices of the simplicial representation are labelled with these concepts. To form the higher-dimensional simplices we first identify the one-dimensional interconnections of these model concepts, based on Equation~\ref{eq:LV}, as indicated in Table~\ref{tab:LV}.
\begin{table}[!h]
\caption{Conceptual interconnections for Equation~\ref{eq:LV}. A `Y' indicates that there is an interconnection between the two concepts.}
\label{tab:LV}
\begin{tabular}{|m{2cm}|C{1cm}|C{1.6cm}|C{2cm}|C{2cm}|C{1.9cm}|C{1.9cm}|C{1.9cm}|}
\hline
 & Prey & Prey \newline growth	& Predation	& Oscillatory \newline population & Predator & Predator \newline growth	& Predator \newline death \\
\hline
Prey & & Y & Y & Y & & & \\ \hline
Prey growth & Y & & Y & Y & & & \\ \hline
Predation & Y & Y & & Y& & & \\ \hline
Oscillatory population & Y & Y & Y & & Y & Y & Y \\ \hline
Predator & & & Y & Y & & Y & Y \\ \hline
Predator growth	& & & Y & Y & Y & & Y \\ \hline
Predator death & & & & Y & Y & Y & \\ \hline
\end{tabular}
\end{table}
For example, `Prey' is directly connected in a conceptual manner with `Prey growth', `Predation', and `Oscillatory population', where each pair of these concepts forms a 1-simplex, each subset of three of these four concepts forms a 2-simplex, and all four of these concepts form a 3-simplex. Note that `Predation' and `Predator death' have no direct connection in the simplicial representation since the effect of predator death on predation is indirect through predator growth.

Consideration of the conceptual basis of models allows for constructive comparison between models irrespective of their mathematical form. Since we generally need to compare models with different forms, we construct simplicial representations that contain the essential components of the models and their interconnections --- components here naturally depend on the scientific problem studied. For instance, we may be interested in comparing a continuum model and a discrete model of the same system in terms of the components of the system that are identified in each model, in order to understand how the models are related. The simplicial representation in Figure~\ref{fig:Fig1} is such a conceptual representation of the Lotka-Volterra model, whereby the simplicial representation provides the essential concepts and interactions underlying the model: the vertices correspond to the underlying concepts of the model, the edges link concepts that are directly interconnected, and the 2- and 3-simplices indicate higher-dimensional interconnections between related concepts.

\subsection{Homology of simplicial representations} \label{subsec:Homology}
Given a simplicial representation of a model we employ persistent homology to provide a unique and visual characterisation of the model. Note that simplicial homology of the full simplicial complex, while simpler than persistent homology, is not applicable here since it does not in general provide a unique characterisation of a model. Indeed, simplicial homology calculates the number of $k$-dimensional holes of the full simplicial complex, for nonnegative integers $k$, and the numbers of $k$-dimensional holes can be equal for distinctly-labelled complexes. A simple example is given by two simplicial complexes that are each just a 0-simplex, where the 0-simplex has a different label in each complex. Then the two labelled complexes are distinct, however they have the same numbers of $k$-dimensional holes since they both consist of a single component, or 0-dimensional hole, and have no higher-dimensional holes. We therefore require the filtration in persistent homology in order to uniquely characterise distinct simplicial complexes. This is different from the situation typically encountered in topological data analysis.

To compare a collection of simplicial representations of models using persistent homology we require filtrations for each complex that are compatible, so that the same topological structures in different complexes give the same persistence intervals. For this we utilise a reference complex such that the simplicial complexes associated to all of the models for comparison are subcomplexes.
\begin{notation}[\textbf{Reference complex}]
Let $\C$ be the set of all components that appear in the collection of models under consideration for comparison, and let $m$ be the maximum dimension of the simplicial complexes associated to the models. Denote by $\RC$ the $(|\C|-1)$-simplex spanned by the labelled 0-simplices $\{ v_{1}, v_{2}, \ldots, v_{|\C|} \}$. The \emph{reference complex} is the $m$-skeleton of $\RC$, therefore $\RC^{(m)}$. Note that each simplicial representation of a model under consideration is a subcomplex of the reference complex.
\end{notation}
Therefore, to obtain filtrations for the simplicial complexes that allow for direct comparison of the persistence barcodes we first obtain a filtration for the reference complex $\RC^{(m)}$. The filtration of each simplicial representation is then an induced filtration of the filtration for $\RC^{(m)}$. In this way we can directly compare the persistence intervals between different models, since identical persistence intervals will arise from identical topological features associated with the same subset of model components.

Note that we could always use the simplex $\RC$ as the reference complex, however it is computationally efficient to use the $m$-skeleton $\RC^{(m)}$ where possible. If we then need to compare an additional model generated by a subset of $\C$ whose simplicial representation has dimension higher than $m$, we can simply extend the filtration of the reference simplex to a filtration of a higher-dimensional skeleton of $\RC$ (see Proposition~\ref{prop:Extension}).

We want the persistent homology of a particular simplicial complex to be unique amongst all of the simplicial complexes of the models of interest, in order to reflect all differences between the complexes and hence models. We therefore employ a \emph{flat filtration} \cite[Chapter 11, Page 83]{Edelsbrunner2014} whereby only one simplex is added at each filtration index, and each simplex is added after all of its proper faces.
\begin{definition}[\bf{Flat filtration}]\label{def:FlatFilt}
Let $\C$ be the set of all components that appear in the collection of models under consideration for comparison, and let $\RC^{(m)}$ be the associated reference complex. Further, let $w \colon \RC^{(m)} \to \big[|\RC^{(m)}|\big]$ be a bijective weight function such that if $\tau$, $\sigma \in \RC^{(m)}$ and $\tau$ is a proper face of $\sigma$ then $w(\tau) < w(\sigma)$. The \emph{flat filtration} of $\big(\RC^{(m)},w\big)$ is the nested sequence of subcomplexes $\{R_i\}_{i=1}^{|\RC^{(m)}|}$, where $R_i = w^{-1}\big((-\infty,i]\big)$ for $i \in \big[|\RC^{(m)}|\big]$.
\end{definition}

The flat filtration on $\RC^{(m)}$ induces a filtration on each subcomplex of $\RC^{(m)}$:

\begin{definition}[\bf{Induced filtration of a subcomplex of the reference complex}]\label{def:IndFilt}
Let $\C$ be the set of all components that appear in the collection of models under consideration for comparison, and let $\big(\RC^{(m)},w\big)$ be the associated reference complex with a flat filtration $\{R_i\}_{i=1}^{|\RC^{(m)}|}$. Further, let $L$ be a simplicial representation of a model generated by a subset of components from $\C$. Then the \emph{induced filtration} for the subcomplex $L \subseteq \RC^{(m)}$ is the nested sequence of subcomplexes $\{F_i\}_{i \in I}$ where, for each $i \in I$, $F_i = w|_{L}^{-1}\big((-\infty,i]\big) = R_i \cap L \neq \emptyset$, and $i \le j$ for all $j$ such that $1 \le j \le |\RC^{(m)}|$ and $R_i \cap L = R_j \cap L$.
\end{definition}

Note that for the induced filtration we exclude filtration indices $i$ for which $R_i \cap L = \emptyset$, and since we may have $R_i \cap L = R_j \cap L$ for some filtration indices $i \ne j$ we take the smallest such index as the corresponding filtration index. Note further that the induced filtration adds one simplex at each index of the filtration, which allows us to use it as a basis for a distance between models (given the set of model components $\C$) as we show below (see Definition~\ref{def:DistPI} and Theorem~\ref{thrm:metrics}).

We now establish that the simplicial representations and associated filtrations provide the desired descriptions of the models. The following theorem shows that the simplicial complex associated with a given model has a unique multiset of persistence intervals with respect to the induced filtration. Therefore the persistence intervals, visualised as a persistence barcode, provide a unique `fingerprint' of the model.

\begin{theorem}\label{theorem:Bijection}
Let $\C$ be the set of all components that appear in the collection of models under consideration for comparison, and let $\RC^{(m)}$ be the associated reference complex. Let $K$ and $L$ be two labelled simplicial complexes associated with two models generated by subsets of $\C$, and let $K$ and $L$ have the induced filtrations from a flat filtration on $\RC^{(m)}$. Then $K = L$ if and only if $\PI_k (K) = \PI_k (L)$ for all $k \ge 0$.
\end{theorem}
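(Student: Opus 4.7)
The plan for the forward direction is straightforward: if $K = L$ as labelled subcomplexes of $\RC^{(m)}$, then the induced filtrations given by Definition~\ref{def:IndFilt} coincide term by term, and hence so do the multisets of persistence intervals in every dimension.

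For the converse I would reconstruct each $p$-simplex set of $L$ directly from the collection of persistence intervals, by exploiting that the induced filtration on $L \subseteq \RC^{(m)}$ is itself a flat filtration of $L$: exactly one simplex of $L$ is appended at each index $i \in I$ by Definition~\ref{def:IndFilt}, and the face-before-coface condition built into Definition~\ref{def:FlatFilt} is preserved under restriction. I would then invoke the standard simplex-interval pairing for flat filtrations: when a $p$-simplex $\sigma$ is added at step $w(\sigma)$, it either creates a new $p$-dimensional hole (in which case $\sigma$ is \emph{positive} and $w(\sigma)$ is the birth of a unique interval in $\PI_p(L)$) or annihilates a $(p-1)$-dimensional hole (in which case $\sigma$ is \emph{negative} and $w(\sigma)$ is the finite death of a unique interval in $\PI_{p-1}(L)$). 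Because only one simplex is added per index, no index plays both roles.

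Consequently, for every $p \ge 0$,
\begin{equation*}
\{\, w(\sigma) : \sigma \in L,\ \dim\sigma = p \,\} \;=\; \{\,\text{births in } \PI_p(L)\,\} \;\cup\; \{\,\text{finite deaths in } \PI_{p-1}(L)\,\},
\end{equation*}
with the convention $\PI_{-1}(L) = \emptyset$. The hypothesis $\PI_k(K) = \PI_k(L)$ for every $k \ge 0$ then forces the sets of weights of $p$-simplices in $K$ and in $L$ to agree for each $p$; since $w$ is a bijection on $\RC^{(m)}$ by Definition~\ref{def:FlatFilt}, applying $w^{-1}$ and taking the union over $p$ yields $K = L$.

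The main obstacle is the clean invocation of the simplex-interval pairing, the rest being bookkeeping. The pairing is a standard consequence of the incremental persistence algorithm, but it genuinely relies on the fact that exactly one simplex is added per filtration index --- precisely the flatness property built into Definition~\ref{def:FlatFilt} and inherited by $L$. If a self-contained derivation is wanted, one can argue directly from the long exact sequence in simplicial homology of each pair $(F_i, F_{i-1})$, since $F_i / F_{i-1}$ consists of a single cell whose attachment either kills a class in dimension $p-1$ or freely creates one in dimension $p$, giving the required dichotomy between positive and negative simplices.
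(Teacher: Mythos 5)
Your proof is correct, but it takes a genuinely different route from the paper's. The paper proves the backward implication by contraposition: assuming $K \ne L$, it picks a simplex $\sigma^p$ of minimal dimension in the symmetric difference $K \triangle L$ and runs a case analysis ($p=0$ versus $p\ge 1$; the boundary cycle of $\sigma^p$ lying in an independent homology class versus being dependent or already bounding) to exhibit one interval endpoint present in $\PI_{p}$ or $\PI_{p-1}$ for $K$ but not for $L$. You instead prove the stronger statement that the intervals determine the complex outright, by using the positive/negative simplex dichotomy to identify $\{\,w(\sigma) : \sigma \in L,\ \dim\sigma = p\,\}$ with the births of $\PI_p(L)$ together with the finite deaths of $\PI_{p-1}(L)$, and then applying $w^{-1}$. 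Both arguments rest on the same underlying mechanism --- under the induced (flat) filtration each added simplex contributes exactly one interval endpoint, either a birth in its own dimension or a death one dimension below, and distinct simplices contribute distinct indices because $w$ is a bijection --- but your packaging as an explicit reconstruction is cleaner: it sidesteps the paper's somewhat delicate subcases and makes transparent why endpoints in different dimensions cannot collide. The trade-off is that you import the standard simplex--interval pairing as a known fact where the paper proves the special case it needs by hand; your sketch via the long exact sequence of the pairs $(F_i, F_{i-1})$ would suffice to make that self-contained, so there is no gap.
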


\begin{proof}
The forward implication is trivial so consider the backward implication, for which we provide a contrapositive proof. Suppose that $K \ne L$, and let $\sigma^p$ be a simplex of smallest dimension $p$ in the symmetric difference $K \triangle L$. Without loss of generality we may assume that $\sigma^p \in K$. We show that there exists $k \ge 0$ such that $\PI_k (K) \ne \PI_k (L)$.

If $p = 0$ then $\PI_0 (K)$ contains an interval with left endpoint equal to the filtration index $i$ at which $\sigma^p$ is added to the filtered complex, corresponding to the creation of a 0-dimensional homology class. Since $\sigma^p \notin L$ there is no simplex added at index $i$ in the filtration of $L$, hence no interval with left endpoint $i$ in $\PI_0 (L)$.

For $p \ge 1$ the boundary of $\sigma^p$ is a $(p-1)$-cycle $\tau^{p-1}$, which is a nonbounding cycle in the filtered complex prior to the addition of $\sigma^p$. So either $\tau^{p-1}$ corresponds to a linearly independent $(p-1)$-dimensional homology class, or the homology class for $\tau^{p-1}$ is a linear combination of the $(p-1)$-dimensional homology classes. If $\tau^{p-1}$ is in a linearly independent homology class then $\PI_{p-1} (K)$ contains an interval with right endpoint equal to the index $j$ of the filtration where the homology class is annihilated by the addition of $\sigma^p$. Since $\sigma^p \notin L$ there is no interval in $\PI_{p-1} (L)$ with the same right endpoint $j$. Alternatively, suppose the homology class for $\tau^{p-1}$ is a linear combination of the $(p-1)$-dimensional homology classes. If the addition of $\sigma^p$ at some index $i$ in the filtration creates a $p$-cycle, and hence a new $p$-dimensional homology class, containing $\sigma^p$ then there is a corresponding interval in $\PI_p (K)$ with left endpoint $i$ and no interval in $\PI_p (L)$ with left endpoint $i$. Otherwise, if the addition of $\sigma^p$ at some index $i$ in the filtration does not create a $p$-cycle then, since $\tau^{p-1}$ is now a bounding cycle, it follows that one of the previously linearly independent $(p-1)$-dimensional homology classes is now linearly dependent and therefore annihilated at index $i$. Therefore, there is a corresponding interval in $\PI_{p-1} (K)$ with right endpoint $i$, and no interval in $\PI_{p-1} (L)$ with right endpoint $i$.
\end{proof}

We can rephrase Theorem~\ref{theorem:Bijection} in terms of the full multisets of persistence intervals:
\begin{corollary}\label{corollary:Bijection}
Let $\C$ be the set of all components that appear in the collection of models under consideration for comparison, and let $\RC^{(m)}$ be the associated reference complex. Let $K$ and $L$ be two labelled simplicial complexes associated with two models generated by subsets of $\C$, and let $K$ and $L$ have the induced filtrations from a flat filtration on the reference simplex $\RC^{(m)}$. Then $K = L$ if and only if $\PI (K) = \PI (L)$.
\end{corollary}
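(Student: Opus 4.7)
The plan is to derive Corollary~\ref{corollary:Bijection} directly from Theorem~\ref{theorem:Bijection} by showing that, under a flat filtration, the dimension-stratified multisets $\PI_k$ are recoverable from the single multiset $\PI$. The forward direction is immediate: if $K = L$ as labelled subcomplexes of $\RC^{(m)}$, then their induced filtrations coincide and so do all persistence computations, giving $\PI(K) = \PI(L)$ as the disjoint union $\bigcup_{k \ge 0} \PI_k(K) = \bigcup_{k \ge 0} \PI_k(L)$.

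For the converse, I would first record a structural property of any flat filtration $\{R_i\}$ of $\RC^{(m)}$: at each index $i$ exactly one simplex $\sigma_i$ of some specific dimension $d_i$ is added, and $\sigma_i$ is classified once and for all as either \emph{positive}, creating a new homology class of dimension $d_i$, or \emph{negative}, annihilating a previously existing class of dimension $d_i - 1$. Consequently each index $i$ appears as the left endpoint of at most one interval (if $\sigma_i$ is positive, giving a $d_i$-dimensional interval) or as the right endpoint of at most one interval (if $\sigma_i$ is negative, giving a $(d_i - 1)$-dimensional interval). The induced filtration on a subcomplex only restricts which indices appear, but inherits this bookkeeping because every proper face of each $\sigma_i \in K$ is also in $K$ and precedes it.

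The main step is then to observe that, given the fixed reference filtration, the dimension of any interval $(i, j) \in \PI(K)$ is determined purely by its left endpoint via $\dim = \dim(\sigma_i)$ (and equivalently $\dim(\sigma_j) - 1$). Hence from the unlabelled multiset $\PI(K)$ one recovers $\PI_k(K)$ by partitioning intervals according to the dimensions of the simplices added at their birth indices. Applying this partition to both $K$ and $L$, an equality $\PI(K) = \PI(L)$ yields $\PI_k(K) = \PI_k(L)$ for all $k \ge 0$, and Theorem~\ref{theorem:Bijection} then gives $K = L$.

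The only delicate point is the positive/negative dichotomy and the fact that each filtration index plays exactly one such role; this is essentially the standard pairing in persistence for filtrations that add one simplex at a time, and the flatness hypothesis together with the requirement that proper faces precede their cofaces is precisely what licenses it. I do not anticipate further obstacles, since once the dimension of each interval can be read off its endpoints the corollary is a short bookkeeping consequence of the theorem.
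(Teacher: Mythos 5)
Your proposal is correct and follows essentially the same route as the paper: both reduce the corollary to Theorem~\ref{theorem:Bijection} by observing that, because the induced filtration adds at most one simplex per index of the common flat filtration on $\RC^{(m)}$, the dimension of the simplex added at an interval's left endpoint uniquely determines the dimension of the homology class it creates, so the partition of $\PI(K)$ into the $\PI_k(K)$ is recoverable from $\PI(K)$ alone. Your additional remarks on the positive/negative simplex dichotomy are a slightly more explicit version of the same bookkeeping the paper uses.
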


\begin{proof}
The forward implication follows from Theorem~\ref{theorem:Bijection}. The backward implication will also follow from Theorem~\ref{theorem:Bijection} if we establish that $\PI (K) = \PI (L)$ if and only if $\PI_k (K) = \PI_k (L)$ for all $k \ge 0$. Note that $\F (K) = \{\, \PI_k (K) \mid \text{$\PI_k (K) \ne \emptyset$ and $k \ge 0$} \,\}$ is a partition of $\PI (K)$, and similarly for $\F (L)$. So $(a,b) \in \PI (K)$ implies $(a,b) \in \PI_k (K)$ for some $k \ge 0$, and if $(a,b) \in \PI (L)$ then $(a,b) \in \PI_j (L)$ for some $j \ge 0$. Since both $K$ and $L$ are filtered with induced filtrations, whereby at most one simplex is added at each filtration index, the dimension of the simplex added at index $a$ uniquely determines the dimension of the homology class created at index $a$ corresponding to the interval $(a,b)$. Therefore $k = j$, and it follows that $\PI (K) = \PI (L)$ is equivalent to $\PI_k (K) = \PI_k (L)$ for all $k \ge 0$.
\end{proof}

For a general filtration the multiplicity of each persistence interval may be greater than one. We now show that with the induced filtration, however, the multiplicity of each persistence interval is equal to one, so that the multiset of persistence intervals is a set.
\begin{proposition}\label{prop:Multiplicity}
Let $\C$ be the set of all components that appear in the collection of models under consideration for comparison, and let $\RC^{(m)}$ be the associated reference complex. Let $K$ be a labelled simplicial complex associated with a model generated by a subset of $\C$, and let $K$ have the induced filtration from a flat filtration on the reference complex $\RC^{(m)}$. Then every interval in $\PI(K)$ has multiplicity one.
\end{proposition}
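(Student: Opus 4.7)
The plan is to reduce the statement to the well-known fact that in a filtration which adds exactly one simplex at each step, every birth and every death event is caused by a unique simplex, so each persistence interval occurs exactly once. The only nontrivial point is to check that the induced filtration $\{F_i\}_{i \in I}$ on $K$ inherits this ``one simplex per step'' property from the flat filtration on $\RC^{(m)}$; this is where the main obstacle lies, and it is essentially a bookkeeping exercise around the reindexing in Definition~\ref{def:IndFilt}.

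First I would verify the one-simplex-per-step property for the induced filtration. By Definition~\ref{def:FlatFilt}, $R_i \setminus R_{i-1} = \{w^{-1}(i)\}$ for every $i \in \big[|\RC^{(m)}|\big]$, so consecutive subcomplexes of the flat filtration differ by exactly one simplex. Now fix two consecutive indices $i < j$ in $I$, meaning no $i'$ with $i < i' < j$ lies in $I$. By the minimality clause in Definition~\ref{def:IndFilt}, this forces $R_{i'} \cap K = R_i \cap K$ for every such $i'$, so $w^{-1}(i') \notin K$, whereas $w^{-1}(j) \in K$. Consequently $F_j \setminus F_i = \{w^{-1}(j)\}$ is a singleton; the same reasoning also handles the first index of $I$. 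Hence the induced filtration on $K$ adds exactly one simplex at each step.

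Next I would invoke the standard local description of persistent homology, which was already used in the proof of Theorem~\ref{theorem:Bijection}: when a $p$-simplex $\sigma$ is appended, either $\sigma$ creates a new $p$-dimensional homology class (a birth, producing the left endpoint of a single interval in $\PI_p(K)$) or $\sigma$ annihilates an existing $(p-1)$-dimensional class (a death, producing the right endpoint of a single interval in $\PI_{p-1}(K)$). These alternatives are mutually exclusive, and only one such event occurs per step. Therefore each index in $I$ is the left endpoint of at most one interval in $\PI(K)$.

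Finally, suppose some interval $(a,b) \in \PI(K)$ had multiplicity at least two. Then two distinct homology classes of the same dimension would have to be born at filtration index $a$, contradicting the one-event-per-step conclusion of the previous paragraph. Hence every interval in $\PI(K)$ has multiplicity exactly one, as required.
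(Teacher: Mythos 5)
Your proof is correct and follows the same overall strategy as the paper's: reduce the claim to ``each filtration step adds exactly one simplex, and one simplex can be the birth of at most one interval.'' The difference lies in where the work is done. You spend your effort on the bookkeeping that the induced filtration of Definition~\ref{def:IndFilt} really does add one simplex per index (the paper simply asserts this in the remark following that definition), and then you dispose of the crux --- that a single added $p$-simplex cannot create two independent homology classes --- by citing the standard birth/death dichotomy for one-simplex-at-a-time filtrations. The paper does the opposite: it takes the one-simplex-per-step property for granted and instead proves the crux explicitly. It first rules out two intervals in different dimensions sharing a left endpoint (that would need two simplices of different dimensions added simultaneously), and then, for two putative independent classes $\sigma_1,\sigma_2$ born in the same dimension by the addition of a simplex $\tau$, it observes that $\sigma_1+\sigma_2$ over $\ZZ/2\ZZ$ is a nonbounding cycle already present at index $i-1$, so $\tau$ yields only one new class --- a contradiction. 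Your appeal to the standard fact is legitimate (it is exactly the positive/negative-simplex dichotomy of persistent homology), but be aware that this is precisely the step the paper chooses to prove rather than quote; a fully self-contained version of your argument would need to include something like the paper's cycle-sum computation.
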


\begin{proof}
It suffices to show that no two intervals in $\PI(K)$ have the same left endpoint. So suppose to the contrary that the two intervals $I_1$ and $I_2$ in the multiset $\PI(K)$ have the same left endpoint.

Note that $I_1$ and $I_2$ do not correspond to homology classes in different dimensions. Indeed, one $p$-cycle and one $q$-cycle with $p < q$ cannot be created at the same index as this would require the simultaneous addition of a $(p-1)$-simplex and a $(q-1)$-simplex, respectively, which is not possible with the induced filtration.

So $I_1$ and $I_2$ must correspond to linearly-independent homology classes in the same dimension, and therefore to the creation of two linearly-independent nonbounding $p$-cycles $\sigma_1$ and $\sigma_2$ at the same filtration index $i$ due to the addition of a $p$-simplex $\tau$. Then $\sigma_1 + \sigma_2$ under $\ZZ / 2\ZZ$ addition is a nonbounding cycle at index $i-1$, so the addition of $\tau$ at index $i$ yields one new homology class, contradicting the creation of two homology classes. Therefore, there is at most one interval in $\PI(K)$ with a particular left endpoint.
\end{proof}

While we have established that the multiset of persistence intervals is a set, we continue to describe it as a multiset for clarity of reference.

Since a submodel consists of a subset of the components and interconnections of a given model, we expect a similar relationship to hold between the simplicial representations and the corresponding persistent homology, which we now confirm.

\begin{proposition}
Let $\C$ be the set of all components that appear in the collection of models under consideration for comparison, and let $\RC^{(m)}$ be the associated reference complex. Let $\mathsf{M}$ and $\mathsf{N}$ be two models for comparison, where $\mathsf{M}$ is a submodel of $\mathsf{N}$, and let $K$ and $L$ be the associated labelled simplicial complexes of $\mathsf{M}$ and $\mathsf{N}$, respectively, generated by subsets of $\C$. Let $K$ and $L$ have the induced filtrations from a flat filtration on $\RC^{(m)}$. Then $K$ is a subcomplex of $L$. Further, for each $k \ge 0$, the submultiset of finite intervals from $\PI_k (K)$ is a submultiset of $\PI_k (L)$, and each infinite interval in $\PI_k (K)$ is either in $\PI_k (L)$ or corresponds to a finite interval in $\PI_k(L)$ with the same left endpoint.
\end{proposition}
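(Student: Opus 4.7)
The plan is to split the proposition into two parts: first establish the inclusion $K \subseteq L$ as simplicial complexes, then analyse how the persistence intervals under the induced filtrations transfer from $K$ to $L$.

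For the inclusion, I would argue directly from the submodel relationship. Since $\mathsf{M}$ is a submodel of $\mathsf{N}$, every component of $\mathsf{M}$ is a component of $\mathsf{N}$, and every interconnection of $\mathsf{M}$ represented by a simplex of $K$ corresponds to the same interconnection in $\mathsf{N}$; hence every simplex of $K$ is also a simplex of $L$.

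For the persistence intervals, the key observation is that both induced filtrations inherit the same weight function $w$ from the flat filtration on $\RC^{(m)}$. Each simplex $\sigma \in K$ is added at the same reference index $w(\sigma)$ in both filtrations, and $K_i := R_i \cap K$ sits inside $L_i := R_i \cap L$ as a subcomplex for every $i$. I would first verify that every birth in $K$ is a birth in $L$ at the same index: if a $k$-simplex $\sigma_a$ creates a $k$-class in $K$'s filtration then $\partial \sigma_a$ is a boundary in $C_{k-1}(K_{a-1})$, and since $K_{a-1} \subseteq L_{a-1}$ the same bounding $k$-chain witnesses $\partial \sigma_a$ as a boundary in $C_{k-1}(L_{a-1})$, so $\sigma_a$ creates a $k$-class in $L$ at the same index $a$. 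For a finite interval $(a,b) \in \PI_k(K)$, the annihilating $(k+1)$-simplex $\sigma_b$ lies in $K \subseteq L$ and is added at the same index $b$ in $L$, with identical boundary chain; I would then use the standard matrix reduction for persistent homology to argue that the columns of the $L$-boundary matrix corresponding to simplices of $K$ reduce amongst themselves exactly as in $K$'s filtration and produce the same low indices, so that $(a,b)$ also lies in $\PI_k(L)$. For an infinite interval $(a,\infty) \in \PI_k(K)$, the class born at $a$ in $L$ is either never annihilated, giving $(a,\infty) \in \PI_k(L)$, or is annihilated at some later index by a simplex of $L \setminus K$, giving a finite interval in $\PI_k(L)$ with left endpoint $a$.

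The hardest step I anticipate is the reduction argument for finite intervals, since one must carefully rule out that a column of the $L$-boundary matrix indexed by a simplex of $L \setminus K$ inserted between indices $a$ and $b$ reassigns the pairing of $\sigma_a$ via the elder-rule reduction. I would address this by analysing the full $L$-boundary matrix column-by-column, showing that the reductions involving $L \setminus K$ columns interact only with rows and lows that leave the $K$-pairings intact, thereby preserving each finite birth-death pair from $\PI_k(K)$ inside $\PI_k(L)$.
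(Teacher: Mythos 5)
Your decomposition and your handling of the inclusion $K \subseteq L$, of births, and of infinite intervals all track the paper's proof, and you have correctly isolated the genuinely delicate point: whether a column of the $L$-boundary matrix indexed by a simplex of $L \setminus K$ added between indices $a$ and $b$ can reassign the pairing of $\sigma_a$. The difficulty is that the resolution you propose for that step --- that the reductions involving $L \setminus K$ columns ``interact only with rows and lows that leave the $K$-pairings intact'' --- cannot be proved, because it is false. Take $\C = \{c_1,c_2,c_3\}$ with flat filtration $w(v_1)=1$, $w(v_2)=2$, $w(v_3)=3$, $w(\{v_1,v_3\})=4$, $w(\{v_2,v_3\})=5$, $w(\{v_1,v_2\})=6$, and let $K=\{v_1,v_2,\{v_1,v_2\}\}$ sit inside $L=\{v_1,v_2,v_3,\{v_1,v_2\},\{v_1,v_3\},\{v_2,v_3\}\}$. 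In $K$ the edge $\{v_1,v_2\}$ merges the only two components at index $6$, so $(2,6)$ is a finite interval of $\PI_0(K)$. In $L$, however, $v_1$ and $v_2$ are already connected through $v_3$ by index $5$, so the class born at $2$ dies at $5$ and the edge $\{v_1,v_2\}$ instead creates a nonbounding $1$-cycle; one gets $\PI_0(L)=\{(1,\infty),(2,5),(3,4)\}$ and $\PI_1(L)=\{(6,\infty)\}$. The finite interval $(2,6)$ of $\PI_0(K)$ is not in $\PI_0(L)$: the column of $\{v_1,v_2\}$, a death column in $K$'s reduction, becomes a zero (birth) column in $L$'s reduction, precisely the interference your argument must exclude.

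For comparison, the paper's own proof does not fare better on this point: it asserts that ``the subcomplex of $K$ corresponding to the $k$-homology class is also a subcomplex of $L$, so $(a,b) \in \PI_k (L)$,'' which silently assumes the same non-interference property that the example above violates; the persistence pairing is a global feature of the filtration, not a local feature of a subcomplex. Your write-up is therefore more candid about where the difficulty lies, but neither your sketch nor the paper's argument closes it, and the finite-interval half of the statement needs an additional hypothesis (for example, that no simplex of $L \setminus K$ changes the homology of the intermediate complexes $R_i \cap K$ relative to $R_i \cap L$, or some comparable interleaving condition) before any reduction-based argument can go through.
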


\begin{proof}
Since $\mathsf{M}$ is a submodel of $\mathsf{N}$, all of the model components and interconnections in $\mathsf{M}$ are also in $\mathsf{N}$, so the corresponding simplicial complexes satisfy $K \subseteq L$.

Now, fix $k \ge 0$ and let $(a,b)$ be a finite interval in $\PI_k (K)$ corresponding to the creation and subsequent annihilation of a $k$-homology class at indices $a$ and $b$, respectively, in the filtration. Since $K \subseteq L$, the subcomplex of $K$ corresponding to the $k$-homology class is also a subcomplex of $L$, so $(a,b) \in \PI_k (L)$.

Suppose now that the infinite interval $(a,\infty)$ is in $\PI_k (K)$, corresponding to the creation of a $k$-homology class at index $a$ in the filtration which is not annihilated and persists in the full complex $K$. Suppose further, without loss of generality, that $(a,\infty) \notin \PI_k (L)$. Since $K \subseteq L$, the subcomplex of $K$ corresponding to the $k$-homology class is also a subcomplex of $L$, so the $k$-homology class is also created in the filtration of $L$. Since $(a,\infty) \notin \PI_k (L)$, $L$ contains the $(k+1)$-simplex for which the $k$-homology class is the bounding cycle, so the $k$-homology class is annihilated at some index $b$ in the filtration of $L$ due to the addition of the $(k+1)$-simplex. Thus, $(a,b) \in \PI_k (L)$.
\end{proof}

We visualise the persistent homology of a simplicial complex, summarised by the multisets of persistence intervals, as a persistence barcode. The barcodes have the distinct advantage of allowing for a visual comparison of models.

\begin{definition}[\textbf{Persistence barcode}]
Let $\C$ be the set of all components that appear in the collection of models under consideration for comparison, and let $\RC^{(m)}$ be the associated reference complex. Let $K$ be a labelled simplicial complex associated with a model generated by a subset of $\C$, and let $K$ have the induced filtration from a flat filtration on the reference complex $\RC^{(m)}$. For each dimension $p \ge 0$ the $p$th \emph{persistence barcode} $\BAR_p (K)$ for $K$ is a graphical representation of the multiset of $p$-dimensional persistence intervals $\PI_p (K)$. Specifically, let $f \colon \PI_p (K) \to \big[|\PI_p (K)|\big]$ be a bijection, then $\BAR_p (K)$ is given by the set of points $\big\{\, \big(x,f(a,b)\big) \mid \text{$(a,b) \in \PI_p (K)$ and $x \in [a,b)$} \, \big\}$, which is unique up to the bijection specifying the order with respect to the second coordinate.
\end{definition}

While two different flat filtrations of the reference complex $\RC^{(m)}$ can produce two different multisets of persistence intervals for a subcomplex $K$, the numbers of finite and infinite intervals in each submultiset $\PI_k (K)$ are unchanged.
\begin{proposition}\label{prop:FlatFiltBarcodes}
Let $\C$ be the set of all components that appear in the collection of models under consideration for comparison, and let $\RC^{(m)}$ be the associated reference complex. Let $K$ be a labelled simplicial complex associated with a model generated by a subset of components in $\C$, let $\F$ and $\F^{\prime}$ be two induced filtrations of $K$ corresponding to two different flat filtrations of the reference complex $\RC^{(m)}$, and let $\PI (K)$ and $\PI^{\prime} (K)$ be the corresponding multisets of persistence intervals. Then, for all $k \ge 0$, $| \PI_k (K) | = |\PI^{\prime}_k (K)|$, and in particular the numbers of finite and infinite intervals are the same in $\PI_k (K)$ and $\PI^{\prime}_k (K)$.
\end{proposition}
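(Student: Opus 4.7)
The plan is to reduce the statement to a purely combinatorial accounting: once we know that at each step of a flat filtration exactly one simplex is added, and that each added simplex either creates or annihilates a homology class, the counts in each dimension are forced by the simplex counts of $K$ and its Betti numbers, both of which are invariants of $K$.

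First I would observe that the induced filtration of $K$ inherits the defining property of a flat filtration: at each index in the induced filtration exactly one simplex of $K$ is added (by Definition~\ref{def:IndFilt} we discard indices where $R_i \cap L$ does not change), and the order of addition respects the face relation because the ambient flat filtration on $\RC^{(m)}$ does. Hence it suffices to show that for any flat filtration of $K$, the quantities $|\PI_k(K)|$ and the number of infinite intervals in $\PI_k(K)$ depend only on $K$.

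Next I would invoke the standard pairing for flat filtrations, which is implicit in the proof of Theorem~\ref{theorem:Bijection}: when a $p$-simplex $\sigma$ is adjoined to the current subcomplex, either its boundary $\partial\sigma$ is already a boundary, so $\sigma$ creates a new $p$-dimensional homology class (contributing the left endpoint of an interval in $\PI_p(K)$), or $\partial\sigma$ represents a nontrivial $(p-1)$-class that is annihilated (contributing the right endpoint of a finite interval in $\PI_{p-1}(K)$). Writing $n_p$ for the number of $p$-simplices in $K$, $c_p := |\PI_p(K)|$ for the number of $p$-classes created, and $d_p$ for the number of $p$-classes annihilated (equivalently, the number of finite intervals in $\PI_p(K)$), this dichotomy yields
\begin{equation*}
n_p \;=\; c_p \,+\, d_{p-1} \qquad (p \ge 0,\ d_{-1} := 0).
\end{equation*}
The number of infinite intervals in $\PI_p(K)$ equals $c_p - d_p$, and this must coincide with the Betti number $\beta_p(K)$ computed on the full complex $K$ (the unannihilated classes are precisely the generators of $H_p(K;\ZZ/2\ZZ)$), so
\begin{equation*}
\beta_p(K) \;=\; c_p \,-\, d_p.
\end{equation*}

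Finally I would solve this triangular system: $c_0 = n_0$ and $d_0 = n_0 - \beta_0(K)$, and inductively $c_p = n_p - d_{p-1}$, $d_p = c_p - \beta_p(K)$. Since the $n_p$ and $\beta_p(K)$ depend only on $K$ and not on the chosen flat filtration, both $c_p$ and $d_p$ are filtration invariants, and so are $c_p - d_p = \beta_p(K)$ and $d_p$, proving the two assertions for $\PI_k(K)$ and $\PI^{\prime}_k(K)$. The only real subtlety is the first paragraph, namely checking that the induced filtration genuinely adds one simplex at a time and in an order compatible with faces; once that is in hand, the rest is the elementary bookkeeping above.
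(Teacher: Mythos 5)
Your argument is correct, but it is a genuinely different route from the paper's. The paper proves the proposition by constructing an explicit bijection $f \colon \PI_k (K) \to \PI^{\prime}_k (K)$, matching each interval to the interval in the other filtration at which ``the same'' nonbounding $k$-cycle is created, and then separately noting that the infinite-interval counts agree because both filtrations terminate in the same complex $K$. You instead bypass any matching of individual intervals and argue by pure bookkeeping: using the creator/destroyer dichotomy for one-simplex-at-a-time filtrations over $\ZZ/2\ZZ$, you derive the triangular system $n_p = c_p + d_{p-1}$, $\beta_p(K) = c_p - d_p$, whose unique solution expresses $c_p = \lvert \PI_p(K) \rvert$, the finite-interval count $d_p$, and the infinite-interval count $\beta_p(K)$ entirely in terms of the simplex counts and Betti numbers of $K$ --- quantities that manifestly do not depend on the chosen flat filtration. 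What your approach buys is robustness and explicitness: it sidesteps the delicate question (which the paper's proof treats somewhat informally) of what it means for the same cycle to be ``created'' at some index of a differently ordered filtration, and it yields closed-form expressions for all the counts. What the paper's approach buys, if one accepts the identification of features across filtrations, is a canonical correspondence between the intervals themselves rather than merely an equality of cardinalities. Your observation in the first paragraph --- that the induced filtration adds exactly one simplex per retained index in a face-compatible order --- is indeed the load-bearing hypothesis for the dichotomy, and it is justified by Definition~\ref{def:IndFilt} and the remark following it, so the proof is complete.
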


\begin{proof}
Fix $k \ge 0$. We first establish a bijection $f \colon \PI_k (K) \to \PI^{\prime}_k (K)$. Let $I \in \PI_k (K)$, with left endpoint $i$ corresponding to the creation of a nonbounding $k$-cycle $\sigma^k$ at index $i$ of filtration $\F$. Since the $k$-cycle is also created at some index $j$ of filtration $\F^{\prime}$, there exists $I^{\prime} \in \PI^{\prime}_k (K)$ with left endpoint $j$, so we define $f(I) = I^{\prime}$. The mapping $f$ is well defined, since each topological feature appears at a unique filtration index.

We need to show that $f$ is injective and surjective. For injectivity, suppose that $I_1$, $I_2 \in \PI_k (K)$ with $f(I_1) = f(I_2)$. In particular, the intervals $f(I_1)$ and $f(I_2)$ have the same left endpoint corresponding to the creation of a nonbounding $k$-cycle in the filtration $\F^{\prime}$, and since this nonbounding $k$-cycle is created at the left endpoints of both $I_1$ and $I_2$ we have $I_1 = I_2$. For surjectivity, let $I^{\prime} \in \PI^{\prime}_k (K)$. Then the left endpoint of $I^{\prime}$ corresponds to the creation of a nonbounding $k$-cycle in the filtration $\F^{\prime}$, and this nonbounding $k$-cycle is also created at some index $i$ of the filtration $\F^{\prime}$, so there exists $I \in \PI_k (K)$ with left endpoint $i$, so that $f(I) = I^{\prime}$.

Finally, the two induced filtrations correspond to the same simplicial complex $K$, so $\PI_k (K)$ and $\PI^{\prime}_k (K)$ have the same number of infinite intervals, and therefore the same number of finite intervals.
\end{proof}

We can extend the model comparison to include new models for which not all components are in $\C$ by appending the new components to $\C$ to obtain a superset $\overbar{\C}$, and then extending the ordering function on $\C$ to $\overbar{\C}$. {This may be necessary as our set of candidate models grows, or if we want to increase the level of detail in which we describe models}. This extension method preserves the simplicial representations, and associated persistence intervals, of the models generated by $\C$. Notationally, we use an overline to denote concepts in the extended system.
\begin{proposition}\label{prop:Extension}
Let $\C$ and $\overbar{\C}$ be two sets of model components with $\overbar{\C} \supseteq \C$. Let $\overbar{\Ord} \colon \overbar{\C} \to \big[|\overbar{\C}|\big]$ be a bijection specifying an order for $\overbar{\C}$ such that $\overbar{\Ord} |_{\C} = \Ord$. Further, let $\RC^{(m)}$ and $\overbar{\RC}^{(m)}$ be the reference complexes associated with $\C$ and $\overbar{\C}$, respectively. Then we have the following:
\begin{enumerate}\setlength\itemsep{0.5em}
\item The reference complexes satisfy $\RC^{(m)} \subseteq \overbar{\RC}^{(m)}$. \label{Extension1}
\item Each flat filtration of $\RC^{(m)}$ can be extended to a flat filtration of $\overbar{\RC}^{(m)}$. \label{Extension2}
\item Suppose $\RC^{(m)}$ has a flat filtration, which is extended to a flat filtration on $\overbar{\RC}^{(m)}$. For a simplicial representation $K$ of a model generated by a subset of components in $\C$ we have $\PI_k (K) = \overbar{\PI}_k (K)$ for all $k \ge 0$. In other words, the persistent homology for $K$ corresponding to the induced filtration from the flat filtration on $\RC^{(m)}$ is the same as the persistent homology for $K$ corresponding to the induced filtration from the extended flat filtration on $\overbar{\RC}^{(m)}$. \label{Extension3}
\end{enumerate}
\end{proposition}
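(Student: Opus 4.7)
The plan is to handle the three parts in sequence, with the real content concentrated in part \ref{Extension2} and part \ref{Extension3} following almost immediately from the construction used there.

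For part \ref{Extension1}, I would unpack the definitions of the two reference complexes. Because $\overbar{\Ord}|_{\C} = \Ord$, the elements of $\C$ receive exactly the same indices $1, \ldots, |\C|$ under the extended ordering, so the vertex set $\{v_1, \ldots, v_{|\C|}\}$ of $\RC$ is literally the set of the first $|\C|$ vertices of $\overbar{\RC}$. Any simplex in $\RC^{(m)}$ is spanned by a subset of these vertices and has dimension at most $m$, hence lies in the $m$-skeleton $\overbar{\RC}^{(m)}$ as well.

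For part \ref{Extension2}, given a flat filtration $w\colon \RC^{(m)} \to \big[|\RC^{(m)}|\big]$, I would define $\overbar{w}$ by setting $\overbar{w}|_{\RC^{(m)}} = w$, and then assigning the remaining values $|\RC^{(m)}|+1, \ldots, |\overbar{\RC}^{(m)}|$ bijectively to the simplices of $\overbar{\RC}^{(m)} \setminus \RC^{(m)}$ using any linear extension of the proper-face partial order (for instance, order by dimension and break ties arbitrarily). To verify that $\overbar{w}$ is itself a flat filtration I would check the monotonicity condition $\overbar{w}(\tau) < \overbar{w}(\sigma)$ whenever $\tau$ is a proper face of $\sigma$. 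The key observation is that $\overbar{\RC}^{(m)} \setminus \RC^{(m)}$ consists precisely of those simplices containing at least one new vertex $v_i$ with $i > |\C|$; if $\tau \subsetneq \sigma$ then $\tau$ lies in this complement only if $\sigma$ does, which rules out the problematic case $\tau \notin \RC^{(m)}$, $\sigma \in \RC^{(m)}$. The other three cases are handled by the flatness of $w$ on $\RC^{(m)}$, the fact that every new value exceeds $|\RC^{(m)}|$, and the linear-extension property within the complement.

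For part \ref{Extension3}, since $K \subseteq \RC^{(m)}$ and $\overbar{w}$ restricts to $w$ on $\RC^{(m)}$, we have $\overbar{w}|_K = w|_K$. Reading off Definition~\ref{def:IndFilt}, the induced filtration of $K$ is determined entirely by the restricted weight function on $K$: the filtration indices are those $i$ for which $w|_K^{-1}\big((-\infty,i]\big)$ strictly grows, and the subcomplexes are these preimages. Thus the two induced filtrations of $K$, obtained from $w$ and from $\overbar{w}$, coincide as filtered simplicial complexes, and so their persistent homology and persistence intervals coincide, giving $\PI_k(K) = \overbar{\PI}_k(K)$ for every $k \ge 0$.

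The main obstacle is really only in part \ref{Extension2}: one has to exhibit a weight extension that respects the proper-face order across the old/new boundary. Once this is done, the agreement $\overbar{w}|_K = w|_K$ makes part \ref{Extension3} a one-line consequence, and part \ref{Extension1} is essentially a bookkeeping check on the definitions.
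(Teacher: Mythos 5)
Your proof is correct and follows essentially the same route as the paper's: part \ref{Extension1} by comparing spanning vertex sets, part \ref{Extension2} by extending the weight function $w$ to a bijective weight function $\overbar{w}$ on $\overbar{\RC}^{(m)}$, and part \ref{Extension3} by observing that the two filtrations agree on $K$ so the induced filtrations and persistence intervals coincide. The only difference is that you explicitly construct and verify the extension $\overbar{w}$ (the case analysis showing a proper face of a simplex in $\RC^{(m)}$ cannot lie in $\overbar{\RC}^{(m)} \setminus \RC^{(m)}$), a detail the paper's proof asserts without elaboration.
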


\begin{proof}
For \ref{Extension1}, note that $\RC^{(m)}$ is spanned by a subset of the 0-simplicies that span $\overbar{\RC}^{(m)}$, so $\RC^{(m)}$ is a subcomplex of $\overbar{\RC}^{(m)}$.

For \ref{Extension2}, let $w \colon \RC^{(m)} \to \big[|\RC^{(m)}|\big]$ be a bijective weight function such that if $\tau$ is a proper face of $\sigma$ then $w(\tau) < w(\sigma)$, yielding a flat filtration $\{ R_i \}_{i=1}^{|\RC^{(m)}|}$ where each $R_i = w^{-1}\big((-\infty,i]\big)$. Extend $w$ to a bijective weight function $\overbar{w} \colon \overbar{\RC}^{(m)} \to \big[|\overbar{\RC}^{(m)}|\big]$ such that if $\tau$ is a proper face of $\sigma$ then $\overbar{w}(\tau) < \overbar{w}(\sigma)$. Then the flat filtration $\{ \overbar{R}_i \}_{i=1}^{|\overbar{\RC}^{(m)}|}$ of $\overbar{\RC}^{(m)}$, where $\overbar{R}_i = \overbar{w}^{-1}\big((-\infty,i]\big)$, extends the flat filtration of $\RC^{(m)}$, that is, $R_i = \overbar{R}_i$ for $i \in \big[|\RC^{(m)}|\big]$.

For \ref{Extension3}, let $K$ be a simplicial representation of a model generated by a subset of components in $\C$. Since the flat filtration of $\RC^{(m)}$ and the extended flat filtration of $\overbar{\RC}^{(m)}$ are equal over the first $|\RC^{(m)}|$ filtration indices, the corresponding induced filtrations are equal, and it follows that $\PI_k (K) = \overbar{\PI}_k (K)$ for all $k \ge 0$.
\end{proof}

\subsection{Model comparison by distance} \label{subsec:Distance}
To compare models of interest, given a set of model components $\C$, in a quantitative manner we introduce a measure of distance between the simplicial representations. We provide a direct measure between the simplicial complexes, along with a measure based on the corresponding multisets of persistence intervals, and then show that the two measures yield the same distance.

\begin{definition}[\textbf{Distance between simplicial complexes}]
Let $\C$ be the set of all components that appear in the collection of models under consideration for comparison. Let $\SC$ be a collection of labelled simplicial complexes corresponding to models generated by subsets from $\C$. Define the function
\begin{equation}
d_{\C} \colon \SC \times \SC \to \RR \quad \text{such that} \quad d_{\C}(K,L) = | K \triangle L |,
\end{equation}
so that $d_{\C}$ gives the cardinality of the symmetric difference of the two labelled simplicial complexes.
\end{definition}

Some important remarks regarding the distance function between simplicial complexes:
\begin{itemize}[itemsep=5pt,topsep=5pt]
\item The symmetric difference of two labelled simplicial complexes accounts for the labelling of the simplices, so that two labelled simplices from different simplicial complexes are considered identical when they are spanned by the same set of labelled 0-simplices.
\item The function $d_{\C}$ calculates the number of labelled simplices that must be added to or subtracted from one of the simplicial complexes to obtain the other simplicial complex. In particular, this measure of distance applies for any two simplicial representations, whether or not they have labelled simplices in common.
\item The function $d_{\C}$ is invariant with respect to a reordering of the model components in $\C$, and therefore a relabelling of the vertices of the reference complex, since the original and relabelled simplicial complexes are isomorphic.
\item We label the distance function $d_{\C}$ with the subscript $\C$ to emphasise that the distance between two simplicial representations is dependent on the level of conceptual detail employed according to the chosen set of model components $\C$.
\item Since we add simplices under the field $\ZZ / 2\ZZ$ we have that $K \triangle L = K + L$.
\end{itemize}

\begin{notation}
For a multiset $S$ of intervals $(a,b) \in \NN^{\ast} \times \overline{\NN^{\ast}}$, we denote the projection onto the first coordinates as $\proj_1 (S) = \{\, a \in \NN^{\ast} \mid \text{$(a,b) \in S$ for some $b \in \overline{\NN^{\ast}}$} \,\}$, and the projection onto the second coordinates as $\proj_2 (S) = \{\, b \in \overline{\NN^{\ast}} \mid \text{$(a,b) \in S$ for some $a \in \NN^{\ast}$} \,\}$.
\end{notation}

\begin{definition}[\textbf{Distance between multisets of persistence intervals}]\label{def:DistPI}
Let $\C$ be the set of all components that appear in the collection of models under consideration for comparison, and let $\RC^{(m)}$ be the associated reference complex with a flat filtration. Let $\M$ be a collection of multisets of persistence intervals $\PI(K)$ corresponding to labelled simplicial complexes $K$ with labels from $\C$ and induced filtrations from the flat filtration. Further, let
\begin{equation}
\Theta \colon \M \to 2^{\big[|\RC^{(m)}|\big]} \quad \text{be such that} \quad \Theta\big(\PI(K)\big) = \proj_1 \big(\PI(K)\big) \cup \big(\proj_2 \big(\PI(K)\big) \setminus \{+\infty\}\big),\end{equation}
which is the set of all left endpoints and finite right-endpoints of the persistence intervals. Define the function
\begin{equation}
\hat{d}_{\C} \colon \M \times \M \to \RR \quad \text{by} \quad \hat{d}_{\C}\big(\PI(K),\PI(L)\big) = \big| \Theta\big(\PI(K)\big) \triangle \Theta\big(\PI(L)\big) \big|.
\end{equation}
\end{definition}

We now show that $d_{\C}$ is a distance function on $\SC$, that $\hat{d}_{\C}$ is a distance function on $\M$, and that the metric spaces $(\SC,d)$ and $(\M,\hat{d}_{\C})$ are isometric, so we can use either metric to determine the distances between simplicial complexes. To prove these claims we use the triangle inequality for the symmetric difference: for sets $X$, $Y$, and $Z$, the cardinality of the symmetric difference satisfies subadditivity, that is, $|X \triangle Z| \le |X \triangle Y| + |Y \triangle Z|$. This relation follows by observing that $X \triangle Z \subseteq (X \triangle Y) \cup (Y \triangle Z)$.

We first need a lemma to show that the function $\hat{d}_{\C}$ is invariant with respect to a change in the flat filtration on the reference complex.
\begin{lemma}\label{lemma:Invariance}
Let $\C$ be the set of all components that appear in the collection of models under consideration for comparison, and let $\RC^{(m)}$ be the associated reference complex. The function $\hat{d}_{\C}$ is invariant with respect to a change in the flat filtration of the reference complex $\RC^{(m)}$.
\end{lemma}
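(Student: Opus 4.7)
The plan is to prove that for any labelled simplicial complex $K \subseteq \RC^{(m)}$ equipped with the induced filtration coming from a flat filtration with weight function $w$, the endpoint set $\Theta(\PI(K))$ is precisely $w(K)$, the image under $w$ of the simplices of $K$. Once this identification is in hand, the bijectivity of $w$ yields
\[
\big|\Theta(\PI(K)) \triangle \Theta(\PI(L))\big| = |w(K) \triangle w(L)| = |K \triangle L|,
\]
which is manifestly independent of the choice of flat filtration. In particular this shows that $\hat{d}_{\C}\big(\PI(K),\PI(L)\big)$ agrees with $d_{\C}(K,L)$ and hence does not depend on $w$.

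To establish $\Theta(\PI(K)) = w(K)$ I would argue both inclusions. For $\Theta(\PI(K)) \subseteq w(K)$, note that every left endpoint of an interval in $\PI(K)$ is the filtration index at which some homology class is created, and every finite right endpoint is the index at which a homology class is annihilated; in either case, the event is triggered by the addition of a simplex of $K$ at that index, so the index belongs to $w(K)$. For the reverse inclusion, take $\sigma \in K$ added at index $i = w(\sigma)$. Exactly as in the dichotomy used in the proof of Theorem~\ref{theorem:Bijection}, the addition of $\sigma$ at index $i$ either creates a new homology class, in which case $i$ is a left endpoint of some interval in $\PI(K)$, or annihilates an existing homology class, in which case $i$ is a finite right endpoint of some interval in $\PI(K)$. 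Either way $i \in \Theta(\PI(K))$.

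The main obstacle is the reverse inclusion: one must be sure that no simplex addition is ``silent'', producing neither a left nor a right endpoint. This amounts to invoking the standard pairing dichotomy of persistent homology, which is already used in the proof of Theorem~\ref{theorem:Bijection} (for $p=0$ the simplex always creates a new $0$-class; for $p \ge 1$ the boundary of $\sigma^p$ is either a nonbounding $(p-1)$-cycle, yielding annihilation, or a bounding $(p-1)$-cycle, yielding a new $p$-class). With that dichotomy, together with Proposition~\ref{prop:Multiplicity} to guarantee that distinct simplices of $K$ give rise to distinct endpoints, the identification $\Theta(\PI(K)) = w(K)$ is complete and the lemma follows immediately.
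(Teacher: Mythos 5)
Your proof is correct, but it is organised around a different key fact than the paper's argument. The paper proves Lemma~\ref{lemma:Invariance} by taking the permutation $\pi = w' \circ w^{-1}$ of $\big[|\RC^{(m)}|\big]$ relating the two weight functions and asserting that it restricts to bijections $\Theta\big(\PI(K)\big) \to \Theta\big(\PI'(K)\big)$ and $\Theta\big(\PI(L)\big) \to \Theta\big(\PI'(L)\big)$, hence to a bijection of the symmetric differences; the justification for that restriction is left implicit. You instead prove the explicit identification $\Theta\big(\PI(K)\big) = w(K)$, using the standard positive/negative dichotomy for simplex additions (every simplex added in a flat filtration either creates or annihilates exactly one homology class, so no addition is ``silent''), and then conclude $\big|\Theta\big(\PI(K)\big) \triangle \Theta\big(\PI(L)\big)\big| = |w(K \triangle L)| = |K \triangle L|$ from the injectivity of $w$. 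This buys two things: it supplies precisely the missing justification for the paper's ``from which it follows'' step (since $\pi\big(w(K)\big) = w'(K)$ is then immediate), and it simultaneously establishes that $\hat{d}_{\C}\big(\PI(K),\PI(L)\big) = d_{\C}(K,L)$, which is the isometry the paper only proves later, by essentially the same identification, in Theorem~\ref{thrm:metrics}. The only points worth making explicit in your write-up are that the index set of the induced filtration of Definition~\ref{def:IndFilt} is exactly $w(K)$ with one simplex added per index (which is what lets you speak of ``the simplex added at index $i$''), and that the appeal to Proposition~\ref{prop:Multiplicity} is not actually needed for the set equality $\Theta\big(\PI(K)\big) = w(K)$, only the creation/annihilation dichotomy is.
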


\begin{proof}
Let $\PI (K)$ and $\PI (L)$ be the multisets of persistence intervals for $K$ and $L$, respectively, corresponding to the induced filtrations from the first flat filtration with associated weight function $w$, and let $\PI^{\prime} (K)$ and $\PI^{\prime} (L)$ be the multisets of persistence intervals for $K$ and $L$, respectively, corresponding to the induced filtrations from the second flat filtration with associated weight function $w^{\prime}$.

There exists a permutation $\pi$ on $\big[|\RC^{(m)}|\big]$ such that $\pi \circ w = w^{\prime}$, from which it follows that there is a bijection $f \colon \Theta\big(\PI(K)\big) \cup \Theta\big(\PI(L)\big) \to \Theta\big(\PI^{\prime} (K)\big) \cup \Theta\big(\PI^{\prime} (L)\big)$ where the restriction of $f$ to $\Theta\big(\PI(K)\big)$ is a bijection onto $\Theta\big(\PI^{\prime} (K)\big)$ and the restriction of $f$ to $\Theta\big(\PI(L)\big)$ is a bijection onto $\Theta\big(\PI^{\prime} (L)\big)$. It follows that there is a bijection between $\Theta\big(\PI(K)\big) \triangle \Theta\big(\PI(L)\big)$ and $\Theta\big(\PI^{\prime} (K)\big) \triangle \Theta\big(\PI^{\prime} (L)\big)$, so $\hat{d}_{\C}\big(\PI(K),\PI(L)\big) = \hat{d}_{\C}\big(\PI^{\prime} (K),\PI^{\prime} (L)\big)$.
\end{proof}

\begin{theorem}\label{thrm:metrics}
Let $\C$ be the set of all components of models under consideration, and let $\RC^{(m)}$ be the associated reference complex with a flat filtration. Let $\SC$ be a collection of labelled simplicial complexes with labels from $\C$, and let $\M$ be the collection of multisets of persistence intervals $\PI(K)$, for $K \in \SC$ with the induced filtrations. Then the function $d_{\C}$ is a metric on $\SC$, the function $\hat{d}_{\C}$ is a metric on $\M$, and the metric spaces $(\SC,d)$ and $(\M,\hat{d}_{\C})$ are isometric.
\end{theorem}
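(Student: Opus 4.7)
The plan is to derive all three claims from a single key identity: for any subcomplex $K \subseteq \RC^{(m)}$ equipped with the induced filtration from a flat filtration with weight function $w$, the endpoint set satisfies
\[
\Theta\bigl(\PI(K)\bigr) = w(K).
\]
To establish this, I would exploit the defining property of a flat filtration that exactly one simplex is added at each filtration index. When the $p$-simplex $\sigma = w^{-1}(i)$ is added at index $i$, a standard pairing argument in persistent homology (essentially the one carried out already inside the proof of Theorem~\ref{theorem:Bijection}) shows that $\sigma$ is either \emph{positive}, in which case it creates a new $p$-dimensional homology class and $i$ appears as a left endpoint in $\PI_p(K)$, or \emph{negative}, in which case it annihilates a $(p-1)$-dimensional homology class and $i$ appears as a finite right endpoint in $\PI_{p-1}(K)$. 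Either way $i \in \Theta(\PI(K))$; and conversely every endpoint in $\Theta(\PI(K))$ is a filtration index at which some simplex of $K$ was added, hence lies in $w(K)$.

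Once this identity is in hand, the isometry is almost immediate. The map $\phi \colon \SC \to \M$ defined by $\phi(K) = \PI(K)$ is a bijection by Corollary~\ref{corollary:Bijection}. Because $w$ is a bijection on $\RC^{(m)}$, for any $K, L \subseteq \RC^{(m)}$ we have $w(K \triangle L) = w(K) \triangle w(L)$ and hence $|w(K) \triangle w(L)| = |K \triangle L|$, so the key identity yields
\[
\hat{d}_{\C}\bigl(\PI(K),\PI(L)\bigr) = \bigl|\Theta(\PI(K)) \triangle \Theta(\PI(L))\bigr| = |w(K) \triangle w(L)| = d_{\C}(K,L).
\]

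Next I would verify the metric axioms directly for $d_{\C}$: non-negativity and symmetry are immediate from properties of $\triangle$; identity of indiscernibles follows from $K \triangle L = \emptyset \iff K = L$; and the triangle inequality is the subadditivity of $|\cdot \triangle \cdot|$ recorded just above the statement. The corresponding axioms for $\hat{d}_{\C}$ transfer automatically along the isometric bijection $\phi$, with well-definedness with respect to the (arbitrary) choice of flat filtration of $\RC^{(m)}$ guaranteed by Lemma~\ref{lemma:Invariance}.

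The main obstacle is the key identity $\Theta(\PI(K)) = w(K)$; once the pairing between simplex additions and interval endpoints is established, the metric axioms and the isometry follow by routine set-theoretic manipulations. I expect this step to be clean but to require a careful appeal to the mechanics of persistent homology under a flat filtration, essentially repeating the positive/negative simplex analysis already deployed in the proofs of Theorem~\ref{theorem:Bijection} and Proposition~\ref{prop:Multiplicity}.
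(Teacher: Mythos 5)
Your proof is correct and follows essentially the same route as the paper's: the paper's isometry step is exactly the observation that the weight function restricts to a bijection from $K \triangle L$ onto $\Theta\big(\PI(K)\big) \triangle \Theta\big(\PI(L)\big)$, which is the content of your identity $\Theta\big(\PI(K)\big) = w(K)$, and the metric axioms are checked via subadditivity of the symmetric difference just as you describe (the paper verifies them for $\hat{d}_{\C}$ directly using Corollary~\ref{corollary:Bijection} rather than transferring them along the isometry, a cosmetic difference). If anything, your explicit positive/negative-simplex justification of $\Theta\big(\PI(K)\big) = w(K)$ supplies a detail the paper's proof asserts rather tersely.
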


\begin{proof}
To see that $d_{\C}$ is a metric on $\SC$, suppose that $K$, $L \in \SC$. Then $d_{\C}(K,L) = 0$ if and only if $| K \triangle L | = 0$ if and only if $K = L$, so the identity of indiscernibles holds. Symmetry follows from the symmetric difference. It remains to show subadditivity, so let $M \in \SC$. The triangle inequality for the symmetric difference gives $d_{\C}(K,L) = | K \triangle L| \le | K \triangle M | + | M \triangle L | = d_{\C}(K,M) + d_{\C}(M,L)$, as required.

Next we show that $\hat{d}_{\C}$ is a metric on $\M$, noting that we can consider any particular induced filtration since $\hat{d}_{\C}$ is invariant with respect to a change in the flat filtration of the reference complex by Lemma~\ref{lemma:Invariance}. So let $\PI(K)$, $\PI(L) \in \M$ be two multisets of persistence intervals corresponding to the simplicial complexes $K$ and $L$. For the identity of indiscernibles, $\hat{d}_{\C}\big(\PI(K),\PI(L)\big) = 0$ if and only if $\Theta\big(\PI(K)\big) = \Theta\big(\PI(L)\big)$, so that $K$ and $L$ contain the same simplicies, if and only if $K = L$ if and only if $\PI(K) = \PI(L)$ by Corollary~\ref{corollary:Bijection}. Symmetry follows from the symmetric difference, and subadditivity follows from the triangle inequality for the symmetric difference.

Finally, to show that the two metric spaces $(\SC,d_{\C})$ and $(\M,\hat{d}_{\C})$ are isometric we need to establish a surjective isometry $f \colon \SC \to \M$. Define $f$ such that $\K \mapsto \PI(K)$, which is a surjection. Recall that $f$ is an isometry if and only if $\hat{d}_{\C}(f(K),f(L)) = d_{\C}(K,L)$ for all $K$, $L \in \SC$, or equivalently the cardinality of $K \triangle L$ equals the cardinality of $\Theta\big(\PI(K)\big) \triangle \Theta\big(\PI(L)\big)$ for all $K$, $L \in \SC$. To show that $f$ is an isometry it therefore suffices to establish a bijection $\phi_{(K,L)}$, for each $K$, $L \in \SC$, from $K \triangle L$ onto $\Theta\big(\PI(K)\big) \triangle \Theta\big(\PI(L)\big)$. Let $g \colon \RC^{(m)} \to \big[|\RC^{(m)}|\big]$ be the bijective weight function for the flat filtration of the reference complex. Define the function $\phi_{(K,L)} = g|_{K \triangle L}$ by restriction. Then $\phi_{(K,L)}$ is a bijection onto its image, since simplices are added individually in the flat filtration. It remains to show that the image of $K \triangle L$ under $\phi_{(K,L)}$ is $\Theta\big(\PI(K)\big) \triangle \Theta\big(\PI(L)\big)$. Indeed, $\sigma \in K \triangle L$ if and only if $a \in \Theta\big(\PI(K)\big) \triangle \Theta\big(\PI(L)\big)$ where $g|_{K \triangle L} (\sigma) = a$.
\end{proof}

If we know the relationships between two pairs of simplicial representations with a complex in common, say between $J$ and $K$ and between $K$ and $L$, then we can infer the relationship between $J$ and $L$. Specifically, the distance between $J$ and $L$ can be determined either from $J \triangle K$ and $K \triangle L$ or, if the complexes are all subcomplexes of $\RC^{(m)}$ with a flat filtration, from the multisets of persistence intervals. Formally,
\begin{proposition}\label{prop:infer}
Let $\C$ be the set of all components of models under consideration, and let $\RC^{(m)}$ be the associated reference complex with a flat filtration. Let $\SC$ be a collection of labelled simplicial complexes corresponding to models generated by subsets from $\C$. For $J$, $K$, $L \in \SC$ we have
\begin{equation*}
d_{\C}(J,L) = | (J \triangle K) \triangle (K \triangle L) |,
\end{equation*}
and
\begin{equation*}
\hat{d}_{\C}\big(\PI(J),\PI(L)\big) = \Big| \Big(\Theta\big(\PI(J)\big) \triangle \Theta\big(\PI(K)\big)\Big) \triangle \Big(\Theta\big(\PI(K)\big) \triangle \Theta\big(\PI(L)\big)\Big) \Big|.
\end{equation*}
\end{proposition}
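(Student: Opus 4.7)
The plan is to reduce both identities to the single algebraic fact that symmetric difference is associative and self-cancelling. Recall that on the power set of any ambient set, the operation $\triangle$ makes it into an abelian group (isomorphic to an $\FF_2$-vector space) with the empty set as identity, where every element is its own inverse, i.e., $X \triangle X = \emptyset$. Consequently, for any three sets $X$, $Y$, $Z$,
\begin{equation*}
(X \triangle Y) \triangle (Y \triangle Z) = X \triangle (Y \triangle Y) \triangle Z = X \triangle \emptyset \triangle Z = X \triangle Z.
\end{equation*}

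For the first claim, I would simply apply this identity with $X = J$, $Y = K$, $Z = L$, viewing the simplicial complexes as subsets of the reference complex $\RC^{(m)}$, to obtain $(J \triangle K) \triangle (K \triangle L) = J \triangle L$. Taking cardinalities and invoking the definition $d_{\C}(J,L) = |J \triangle L|$ gives the first equality immediately.

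For the second claim, the set $\Theta\big(\PI(K)\big)$ is, by definition, a subset of $\big[|\RC^{(m)}|\big]$, so the three sets $\Theta\big(\PI(J)\big)$, $\Theta\big(\PI(K)\big)$, $\Theta\big(\PI(L)\big)$ all live in a common ambient set, and the same associativity-plus-cancellation identity applies verbatim with $X = \Theta\big(\PI(J)\big)$, $Y = \Theta\big(\PI(K)\big)$, $Z = \Theta\big(\PI(L)\big)$. This yields
\begin{equation*}
\Big(\Theta\big(\PI(J)\big) \triangle \Theta\big(\PI(K)\big)\Big) \triangle \Big(\Theta\big(\PI(K)\big) \triangle \Theta\big(\PI(L)\big)\Big) = \Theta\big(\PI(J)\big) \triangle \Theta\big(\PI(L)\big),
\end{equation*}
whose cardinality is $\hat{d}_{\C}\big(\PI(J),\PI(L)\big)$ by Definition~\ref{def:DistPI}.

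There is no real obstacle here; the proposition is essentially a restatement of the $\FF_2$ cancellation property that was already noted after the definition of $d_{\C}$ (``we add simplices under the field $\ZZ/2\ZZ$ so $K \triangle L = K + L$''). The only point worth verifying carefully is that the second identity compares sets sitting inside the same finite ambient universe $\big[|\RC^{(m)}|\big]$, which it does since every filtration index recorded by $\Theta$ lies in that range; once this is noted, associativity and self-cancellation of $\triangle$ finish the proof in one line each.
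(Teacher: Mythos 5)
Your proof is correct and takes essentially the same route as the paper, which likewise reduces both identities to the single fact that $X \triangle Z = (X \triangle Y) \triangle (Y \triangle Z)$ for any sets $X$, $Y$, $Z$. Your additional justification via the $\FF_2$-group structure of $\triangle$ and the check that the $\Theta$-images live in the common ambient set $\big[|\RC^{(m)}|\big]$ are fine elaborations of the same argument.
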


\begin{proof}
Both equations follow immediately from the observation that the symmetric difference satisfies $X \triangle Z = (X \triangle Y) \triangle (Y \triangle Z)$ for sets $X$, $Y$, and $Z$.
\end{proof}

\subsection{Model comparison by equivalence} \label{subsec:Equivalence}
We now consider model comparison by equivalence, which accounts for particular conceptual similarities between models that we may regard as essentially identical. So rather than comparing the simplicial complexes associated to models based on the presence or absence of a particular labelled simplex, as in model comparison by distance, here we consider the equivalence of models in terms of an equivalence of the corresponding labelled simplicial complexes. As we show below, `equivalence' as used here is restricted to five narrowly-defined operations. Moreover, equivalence between two models is only possible (but not guaranteed) for models that have high similarity in terms of the associated simplicial complexes.

For this we need to specify a \emph{predicate}, $\Pred$, which is a statement that contains a finite number of variables with the specified domain $\Dom$, such that $\Pred$ becomes a (Boolean) proposition when instantiated. We can consider a predicate to be a Boolean-valued function $\Pred \colon \Dom \to \{ \text{true, false}\}$. Predicates are often described in terms of the number of their variables, so that for an integer $n \ge 1$ an $n$-place predicate $\Pred(x_1, \ldots, x_n)$ has $n$ variables $x_1, \ldots, x_n$ with domain $\Dom \subseteq D_1 \times \cdots \times D_n$, where $x_i \in D_i$ for $i = 1, \ldots, n$.

\begin{definition}[\textbf{Equivalent simplicial complexes}]
Let $\C$ be the set of all components of models under consideration, let $\SC$ be a collection of labelled simplicial complexes with labels from $\C$, and let $\Pred$ be a two-place predicate on $\SC \times \SC$ such that
\begin{equation}\label{eq:Equivalence}
\Req = \{\, (K,L) \in \SC \times \SC \mid \Pred(K,L) \,\}
\end{equation}
is an equivalence relation on $\SC$. Then the simplicial complexes $K$, $L \in \SC$ are \emph{equivalent} if and only if $(K,L) \in \Req$.
\end{definition}

To determine the similarity of two models, represented as simplicial complexes, we first need to formulate a predicate on which equivalence is based. There is not a unique predicate for this purpose of comparison, and the established equivalence of models must always be considered with regard to the particular choice of predicate. Note further that equivalence is not a quantitative comparison, but rather a qualitative observation that models have similar features with respect to the specified predicate. Informally, we want to say that $K$, $L \in \SC$ are equivalent when their labelled simplices are conceptually the same, so in particular we need to identify the labelled subcomplexes in $K \setminus L$ with those in $L \setminus K$. For this we employ five operations on a labelled simplicial complex $K$. Since the 1-skeleton of a labelled simplicial complex is a labelled undirected graph, we refer to the 0-simplices and 1-simplices as vertices and edges, respectively.

We first recall the definition of a simplicial map \cite{Rotman1988,Munkres2018}:
\begin{definition}[\textbf{Vertex map, simplicial map}]
Let $K$ and $L$ be two simplicial complexes. A \emph{vertex map} is a function $\psi \colon K^{(0)} \to L^{(0)}$, and a \emph{simplicial map} $\psi \colon K \to L$ is a function such that $\psi |_{K^{(0)}}$ is a vertex map and whenever $W = \{ w_i \}_{i=0}^n$ spans a simplex in $K$ then $\psi(W) = \{ \psi(w_i) \}_{i=0}^n$ spans a simplex in $L$.
\end{definition}

We also require the notion of a simplicial multivalued map, which we define as follows:

\begin{definition}[\textbf{Simplicial multivalued map}]
Let $K$ and $L$ be two simplicial complexes, and let $F \colon K^{(0)} \rightrightarrows L^{(0)}$ be a left-total binary relation such that $F(w)$ is a nonempty subset of $L^{(0)}$ for each $w \in K^{(0)}$. Then $F$ is a \emph{simplicial multivalued map}, denoted $F \colon K \rightrightarrows L$, if whenever a set of vertices $W = \{ w_i \}_{i=0}^n$ spans a simplex in $K$ then the set of vertices $F(W) = \{ F(w_i) \}_{i=0}^n$ spans a simplex in $L$, and for each $i$ we possibly have $\lvert F(w_i) \rvert > 1$. We say that $F$ is \emph{absolutely injective} when $F(v) \cap F(w) \neq \emptyset$ implies $v = w$, for any $v$, $w \in K^{(0)}$, and $F$ is \emph{surjective} when $F(K^{(0)}) = L^{(0)}$.
\end{definition}

We now define the five operations on simplicial complexes. Recall that two distinct vertices in a simplicial complex are \emph{adjacent} if they belong to the same simplex, and for a vertex $u$ in a simplicial complex $K$ we denote the set of all vertices adjacent to $u$ as $V_{K} (u)$, noting that $u \notin V_{K} (u)$.

\begin{definition}[\textbf{Operation 1: Adjacent-vertex identification}]
Let $\C$ be the set of all components of models under consideration, and let $K$ and $L$ be two labelled simplicial complexes with labels from $\C$. Let $\{ u, v \}$ be a pair of adjacent vertices in $K$ such that the following all hold:
\begin{itemize}[itemsep=5pt,topsep=5pt]
\item $V_{K} (u) \setminus \{v\} = V_{K} (v) \setminus \{u\}$.
\item For any nonempty subset $W \subseteq V_{K} (u) \setminus \{v\}$, the vertices $W \cup \{ u \}$ span a simplex in $K$ if and only if the vertices $W \cup \{ v \}$ span a simplex in $K$.
\end{itemize}
A simplicial map $\pi_1 \colon K \to L$ is an \emph{adjacent-vertex identification} if $\pi_1$ is surjective, and is injective and label preserving on every vertex except at the pair of vertices $\{ u, v \}$ that are mapped to a single vertex $c \in L^{(0)}$. That is, for $z \in K^{(0)}$,
\begin{equation}
\pi_1 (z) =
\begin{cases}
z \quad &\text{if $z \notin \{ u, v \}$},\\
c \quad &\text{if $z \in \{ u, v \}$}.\\
\end{cases}
\end{equation}
\end{definition}

\begin{definition}[\textbf{Operation 2: Nonadjacent-vertex identification}]
Let $\C$ be the set of all components of models under consideration, and let $K$ and $L$ be two labelled simplicial complexes with labels from $\C$. Let $\{ u, v \}$ be a pair of nonadjacent vertices in $K$ such that the following all hold:
\begin{itemize}[itemsep=5pt,topsep=5pt]
\item $V_{K} (u)  = V_{K} (v) $.
\item For any nonempty subset $W \subseteq V_{K} (u)$, the vertices $W \cup \{ u \}$ span a simplex in $K$ if and only if the vertices $W \cup \{ v \}$ span a simplex in $K$.
\end{itemize}
A simplicial map $\pi_2 \colon K \to L$ is a \emph{nonadjacent-vertex identification} if $\pi_2$ is surjective, and is injective and label preserving on every vertex except at the pair of vertices $\{ u, v \}$ that are mapped to a single vertex $c \in L^{(0)}$. That is, for $z \in K^{(0)}$,
\begin{equation}
\pi_2 (z) =
\begin{cases}
z \quad &\text{if $z \notin \{ u, v \}$},\\
c \quad &\text{if $z \in \{ u, v \}$}.\\
\end{cases}
\end{equation}
\end{definition}

\begin{definition}[\textbf{Operation 3: Vertex split}]
Let $\C$ be the set of all components of models under consideration, and let $K$ and $L$ be two labelled simplicial complexes with labels from $\C$. Further, let $u \in K^{(0)}$ and $c$, $d \in L^{(0)} \setminus K^{(0)}$ be three vertices with mutually distinct labels such that the following all hold:
\begin{itemize}[itemsep=5pt,topsep=5pt]
\item The vertices $c$, $d \in L$ are adjacent.
\item $V_{L} (c) \setminus \{d\} = V_{L} (d) \setminus \{c\}$.
\item If $W \subseteq V_{K} (u)$ is a nonempty subset such that the vertices $W \cup \{ u \}$ span a simplex in $K$ then there exists a subset $X \subseteq V_{L} (c) \setminus \{d\}$ such that $|W| = |X|$, the vertices in $W$ and $X$ have the same labels, the vertices $X \cup \{ c \}$ span a simplex in $L$, and the vertices $X \cup \{ d \}$ span a simplex in $L$.
\item If $X \subseteq V_{L} (c) \setminus \{d\}$ is a nonempty subset such that the vertices $X \cup \{ c \}$ span a simplex in $L$ and the vertices $X \cup \{ d \}$ span a simplex in $L$ then there exists a subset $W \subseteq V_{K} (u)$ such that $|W| = |X|$, the vertices in $W$ and $X$ have the same labels, and the vertices $W \cup \{ u \}$ span a simplex in $K$.
\end{itemize}
A simplicial multivalued map $\pi_3 \colon K \rightrightarrows L$ is a \emph{vertex split} if $\pi_3$ is surjective, absolutely injective, and single valued and label preserving on every vertex except at the vertex $u$ which is mapped to the pair of vertices $\{ c, d \}$. That is, for $z \in K^{(0)}$,
\begin{equation}
\pi_3 (z) =
\begin{cases}
z \quad &\text{if $z \ne u$},\\
\{ c,d \} \quad &\text{if $z = u$}.\\
\end{cases}
\end{equation}
\end{definition}

\begin{definition}[\textbf{Operation 4: Inclusion}]
Let $\C$ be the set of all components of models under consideration, and let $K$ and $L$ be two labelled simplicial complexes with labels from $\C$. A simplicial map $\pi_4 \colon K \to L$ is an \emph{inclusion} if $\pi_4$ is injective and preserves all labels. That is, for $z \in K^{(0)}$, $\pi_4 (z) = z$.
\end{definition}

\begin{definition}[\textbf{Operation 5: Vertex substitution}]
Let $\C$ be the set of all components of models under consideration, and let $K$ and $L$ be two labelled simplicial complexes with labels from $\C$. A simplicial map $\pi_5 \colon K \to L$ is a \emph{vertex substitution} if $\pi_5$ is bijective and preserves all labels except for one whereby the labelled vertex $u \in K^{(0)}$ is mapped to the labelled vertex $c \in L^{(0)}$. That is, for $z \in K^{(0)}$,
\begin{equation}
\pi_5 (z) =
\begin{cases}
z \quad &\text{if $z \neq u$},\\
c \quad &\text{if $z = u$}.
\end{cases}
\end{equation}
\end{definition}

Note that, for appropriate simplicial complexes, an adjacent-vertex identification is mutually inverse with a corresponding vertex split, a nonadjacent-vertex identification is mutually inverse with an inclusion, and a vertex substitution is mutually inverse with another vertex substitution.

To rigorously establish an equivalence between models through application of Operations 1--5, we must ensure that each operation preserves the representation of the general physical system. It is important to note that an equivalence between models is based on the components of the models that we allow to be identified as equivalent, and therefore the corresponding operations on the simplicial complexes that we regard as admissible. Thus, in addition to the formal requirements (Operations 1--5) the notion of equivalence also incorporates tight domain-specific constraints. The existence of an equivalence between models is therefore dependent on the level of model detail that we include in the simplicial representations, and the model components that we allow to be equivalent at this level of model detail. This approach provides great flexibility to compare models at various levels of conceptual detail, and whether or not models are equivalent or inequivalent will depend on the perspective with which we want to view the models. Model equivalence is not an absolute determination, but rather a perspective that is relative to the operations regarded as admissible.

The application of both Operations 1 and 3 is intrinsically restrictive, since they require similar interconnections between the vertices. Here we explicitly specify the form in which the five operations are admissible.

\begin{definition}[\textbf{Admissible operations on labelled simplicial complexes}]
An \emph{admissible operation} on a labelled simplicial complex associated with a model is one of the following:
\begin{enumerate}[itemsep=5pt,topsep=5pt]
\item Operation 1 (Adjacent-vertex identification): the two identified vertices and the new vertex must have labels that represent conceptually-equivalent components of the model, which should intrinsically hold for the two identified vertices since they are adjacent so have a direct interconnection.
\item Operation 2 (Nonadjacent-vertex identification): the two identified vertices and the new vertex must have labels that represent conceptually-equivalent components of the model, which may not hold for the two identified vertices since they are not adjacent and so lack a direct interconnection.
\item Operation 3 (Vertex split): the original vertex and the two new vertices must have labels that represent conceptually-equivalent components of the model, which should intrinsically hold for the two new vertices since they are adjacent.
\item Operation 4 (Inclusion): the inclusion already implies that the original simplicial complex is conceptually related to the supercomplex.
\item Operation 5 (Vertex substitution): the new and old vertices must have labels that represent conceptually-equivalent components of the model.
\end{enumerate}
\end{definition}

We are now in a position to define the required predicate.

\begin{definition}[Predicate for the equivalence of models]\label{def:Pr}
The two-place predicate $\Pred$ on $\SC \times \SC$ is as follows: for $(K,L) \in \SC \times \SC$ there exists a sequence, which may be empty, of the five admissible operations on simplicial complexes, say $( f_i )_{i=0}^n$, such that each $f_i$ is invertible and $f_n \circ \cdots \circ f_1 \circ f_0 (K) = L$.
\end{definition}
So for $(K,L) \in \SC \times \SC$, the proposition $\Pred(K,L)$ is true when a sequence of the five admissible operations can transform $K$ to $L$ and the corresponding sequence of inverse operations can transform $L$ to $K$, otherwise if no such sequence exists then $\Pred(K,L)$ is false. The following theorem establishes that the specified predicate gives the necessary equivalence relation.

\begin{theorem}
If $\Pred$ is the two-place predicate on $\SC \times \SC$ given in Definition~\ref{def:Pr} then it follows that the relation $\Req = \{\, (x,y) \in \SC \times \SC \mid \Pred (x,y) \,\}$ is an equivalence relation on $\SC$.
\end{theorem}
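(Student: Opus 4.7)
The plan is to verify directly the three defining properties of an equivalence relation---reflexivity, symmetry, and transitivity---using only the structure of $\Pred$ as given in Definition~\ref{def:Pr}, together with the observation stated in the excerpt that the admissible operations pair up as mutual inverses (Operation 1 with Operation 3, Operation 2 with Operation 4, and Operation 5 with itself).

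First, for reflexivity I would use the explicit clause in Definition~\ref{def:Pr} that allows the sequence $(f_i)_{i=0}^n$ to be empty. For any $K \in \SC$, the empty composition is the identity map on $K$, hence sends $K$ to $K$, so $\Pred(K,K)$ holds and $(K,K) \in \Req$. Next, for symmetry, suppose $(K,L) \in \Req$ witnessed by a sequence of invertible admissible operations $(f_i)_{i=0}^n$ with $f_n \circ \cdots \circ f_0 (K) = L$. The key point is that inverting each $f_i$ produces another admissible operation of one of the five allowed types, and an inverse of an invertible map is invertible; hence the reversed sequence $(f_{n-i}^{-1})_{i=0}^n$ is itself a valid witnessing sequence, and its composition satisfies $f_0^{-1} \circ \cdots \circ f_n^{-1} (L) = K$, so $(L,K) \in \Req$. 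Finally, for transitivity, suppose $(K,L) \in \Req$ and $(L,M) \in \Req$, witnessed by sequences $(f_i)_{i=0}^n$ and $(g_j)_{j=0}^p$ respectively. The concatenated sequence $(f_0, \ldots, f_n, g_0, \ldots, g_p)$ consists of invertible admissible operations, and its composition sends $K$ to $L$ and then $L$ to $M$, so $(K,M) \in \Req$.

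The only subtle step is the symmetry argument, which is the expected main obstacle: one must confirm that the inverse of each of Operations 1--5 is again one of Operations 1--5 (and that the admissibility condition on the labels---that identified/split/substituted vertices represent conceptually-equivalent components---is symmetric in its hypotheses, so it transfers from the forward to the backward direction). This is precisely the pairing noted immediately before the definition of admissible operations in the excerpt, so no new work is required beyond citing that observation and checking that each inverse map preserves or swaps the label-equivalence hypotheses appropriately. With those three properties verified, $\Req$ is reflexive, symmetric, and transitive on $\SC$, and hence an equivalence relation.
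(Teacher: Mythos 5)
Your proposal is correct and follows essentially the same route as the paper: reflexivity from the empty sequence, symmetry from the invertibility of each admissible operation (using the pairing of Operations 1/3, 2/4, and 5 with itself), and transitivity from concatenation of witnessing sequences. The paper's own proof is a terser version of exactly this argument, so no further comparison is needed.
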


\begin{proof}
$\Req$ is reflexive, since we can apply the empty sequence of operations to any simplicial complex. Symmetry of $\Req$ follows from the required invertibility of each operation, and transitivity follows from the associativity of composition of the operations. Therefore, $\Req$ is an equivalence relation on $\SC$.
\end{proof}

We reiterate that the model equivalence that we consider here is quite restrictive, so only models that have a high level of similarity would be identified as equivalent. There are many other possible definitions of model equivalence that may be more appropriate when comparing different collections of models. It is important to note that our notion of equivalence of models occurs infrequently, and the demonstration of an equivalence of two models reveals fundamental and nontrivial similarities between the models.

\section{Results and discussion} \label{sec:Discussion}
We now apply our methodology for model comparison to two different collections of models: first, we examine the model equivalence of three enzyme-catalysed reaction mechanisms involving two substrates; and second, we employ our methodology for model comparison to the two main categories of models for developmental pattern formation.

\subsection{Comparison of bisubstrate reactions} \label{subsec:Bisubstrate}
Here we consider the equivalence of three models for enzyme-catalysed reaction mechanisms involving two substrates: the ordered sequential bisubstrate reaction and the random sequential bisubstrate reaction, which are both ternary-complex mechanisms whereby both substrates bind to the enzyme simultaneously; and, the ping-pong bisubstrate reaction, which involves a chemically-modified intermediate form of the enzyme \cite{Cleland1963}.

\subsubsection{Ordered sequential bisubstrate reaction}
In an ordered sequential bisubstrate reaction, two substrates A and B first combine with the enzyme E to form a ternary complex EAB, followed by the reaction, and then the release of the products P and Q. This reaction is shown schematically in Figure~\ref{fig:Fig2}(a), along with the corresponding simplicial representation for our chosen level of conceptual detail.
\begin{figure}[!ht]
\centering\includegraphics[width=1\textwidth]{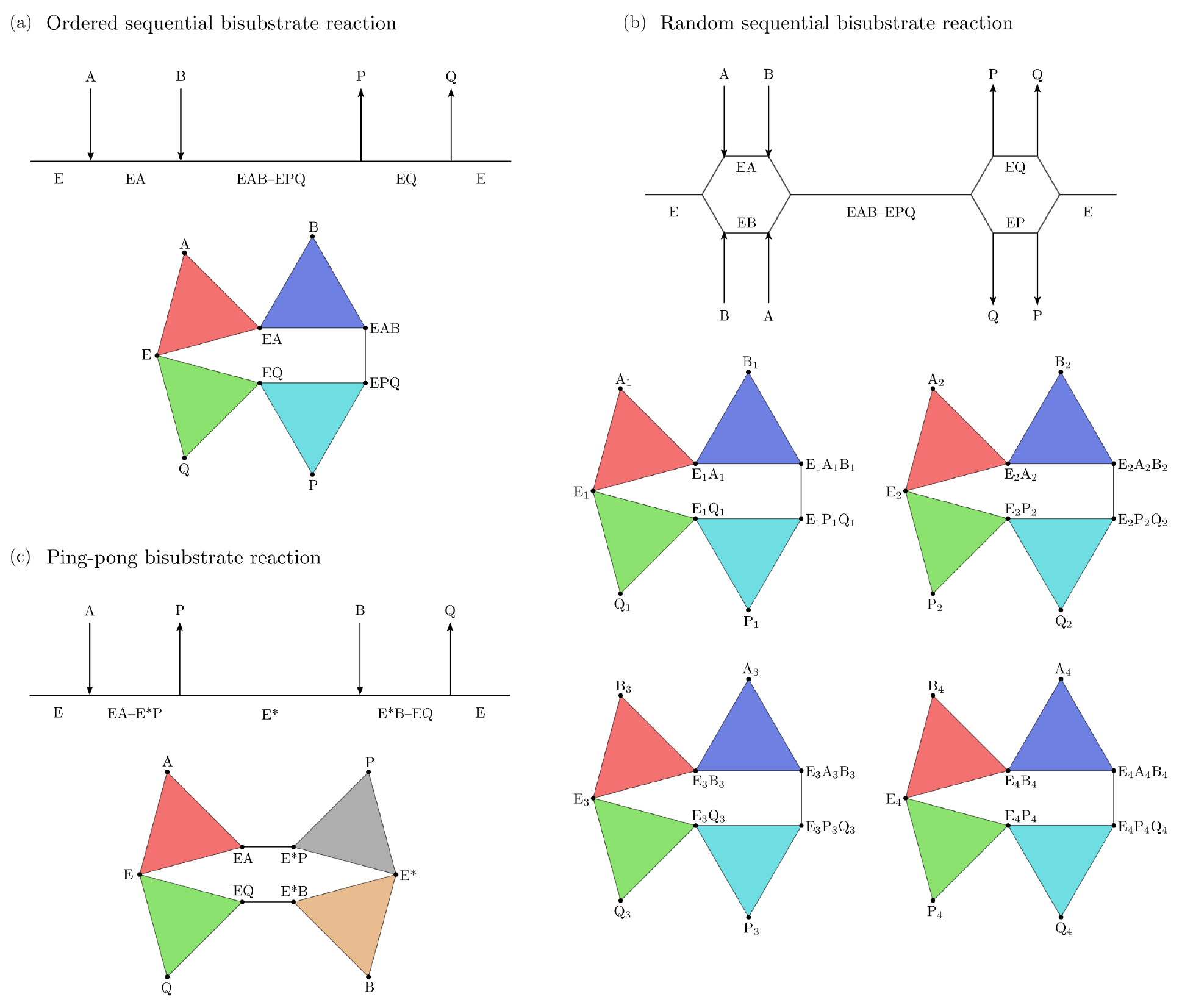}
\caption{{\bf Bisubstrate reactions represented schematically and as associated labelled simplicial complexes.} (a) Ordered sequential bisubstrate reaction. (b) Random sequential bisubstrate reaction. (c) Ping-pong bisubstrate reaction.}
\label{fig:Fig2}
\end{figure}
Note that the substrates combine with the enzyme in a particular order, here A followed by B, and the products are released in a particular order, here P then Q, resulting in a single reaction path.

\subsubsection{Random sequential bisubstrate reaction}
In a random sequential bisubstrate reaction, two substrates A and B first combine with the enzyme E to form a ternary complex EAB, followed by the reaction, and then the release of the products P and Q. This reaction is shown schematically in Figure~\ref{fig:Fig2}(b), along with the corresponding simplicial representation for our chosen level of conceptual detail. Note that, in this case, there is no required order for substrate combination with the enzyme and for product release, resulting in four possible reaction paths. This simplicial representation consists of four components, from a homological perspective, since it represents four different potential reaction paths.

\subsubsection{Ping-pong bisubstrate reaction}
In a ping-pong bisubstrate reaction, also called a double-displacement reaction, the substrate A combines with the enzyme resulting in the release of product P and the formation of the intermediate E*, and then substrate B combines with E* resulting in the release of product Q and regeneration of the enzyme $E$. This reaction is shown schematically in Figure~\ref{fig:Fig2}(c), along with the corresponding simplicial representation for our chosen level of conceptual detail. Note that the ping-pong mechanism requires the release of the first product before the second substrate can react.

\subsubsection{Equivalence of bisubstrate reactions}
We first demonstrate an equivalence between the ordered sequential bisubstrate reaction and the random sequential bisubstrate reaction. To transform the complex for the ordered reaction to the complex for the random reaction, apply vertex substitutions so that E, A, EA, B, EAB, EPQ, P, EQ, and Q become the corresponding E$_1$, A$_1$, E$_1$A$_1$, B$_1$, E$_1$A$_1$B$_1$, E$_1$P$_1$Q$_1$, P$_1$, E$_1$Q$_1$, and Q$_1$, respectively, and then apply an inclusion operation. Conversely, to transform the complex for the random reaction to the complex for the ordered reaction, apply nonadjacent-vertex identifications so that $\{\textrm{E}_i\}_{i=1}^4$, $\{\textrm{A}_i\}_{i=1}^4$, $\{\textrm{E}_i \textrm{A}_i\}_{i=1}^4$, $\{\textrm{B}_i\}_{i=1}^4$, $\{\textrm{E}_i \textrm{A}_i \textrm{B}_i\}_{i=1}^4$, $\{\textrm{E}_i \textrm{P}_i \textrm{Q}_i\}_{i=1}^4$, $\{\textrm{P}_i\}_{i=1}^4$, $\{\textrm{E}_i \textrm{Q}_i\}_{i=1}^4$, and $\{\textrm{Q}_i\}_{i=1}^4$ are identified with E, A, EA, B, EAB, EPQ, P, EQ, and Q, respectively.

Now we demonstrate that the ordered sequential bisubstrate reaction is not equivalent to the ping-pong bisubstrate reaction. We cannot change the 2-simplices with vertex sets $\{ \textrm{E}, \textrm{A}, \textrm{EA}\}$ and $\{ \textrm{E}, \textrm{Q}, \textrm{EQ}\}$ in each bisubstrate reaction, since they correspond to identical components and interactions. The ping-pong bisubstrate reaction contains a 1-simplex with vertex set $\{ \textrm{EA}, \textrm{E*P}\}$, which is not in the ordered sequential bisubstrate reaction and would require a vertex split operation applied to EA in the ordered reaction. The new vertex from the vertex split operation, labelled E*P, would however be a vertex in several new simplices, in particular the simplex with vertex set $\{ \textrm{E}, \textrm{EA}, \textrm{E*P}\}$, which is not in the ping-pong reaction. Therefore, we cannot transform the ordered sequential bisubstrate reaction into the ping-pong bisubstrate reaction, so they are not equivalent.

\subsection{Comparison of Turing-pattern and positional-information models} \label{subsec:TPandPImodels}
Here we apply our methods for model comparison to the two main categories of models for developmental pattern formation, namely Turing-pattern models and positional-information models. We consider the patterning to occur throughout a two-dimensional rectangular domain, where the boundary conditions are always zero-flux on the two opposite sides parallel to the morphogen concentration gradient. We assume that the velocities of the cytoplasm and the growing tissue are negligible, and we therefore assume no advection.

To represent the models as simplicial complexes we first determine the set of components $\C$ on which the models under consideration are based. We consider five positional-information models and four Turing-pattern models. In this case the general components are: agents, namely morphogens, modulators, and substrates; reactions involving the agents, such as self-activation, activation, inhibition, and annihilation; agent degradation; influx and outflux boundary conditions; agent diffusion; profile of the morphogen gradient; and scale invariance of the morphogen gradient. The ordered set of components is shown in Figure~\ref{fig:Fig3}. Note that the components can be given in any order since our definition of distance is invariant with respect to changes in the flat filtration (see Lemma~\ref{lemma:Invariance}). {The distance between models is dependent on the level of conceptual detail used to represent the models, so the distances obtained must be considered with respect to the representative simplicial complexes. The most straightforward way to obtain meaningful comparisons is to use a consistent level of conceptual detail for all models.} Here, we take the reference complex as the simplex spanned by the complete set of components for all nine models.
\begin{figure}[!h]
\centering\includegraphics[width=1\textwidth]{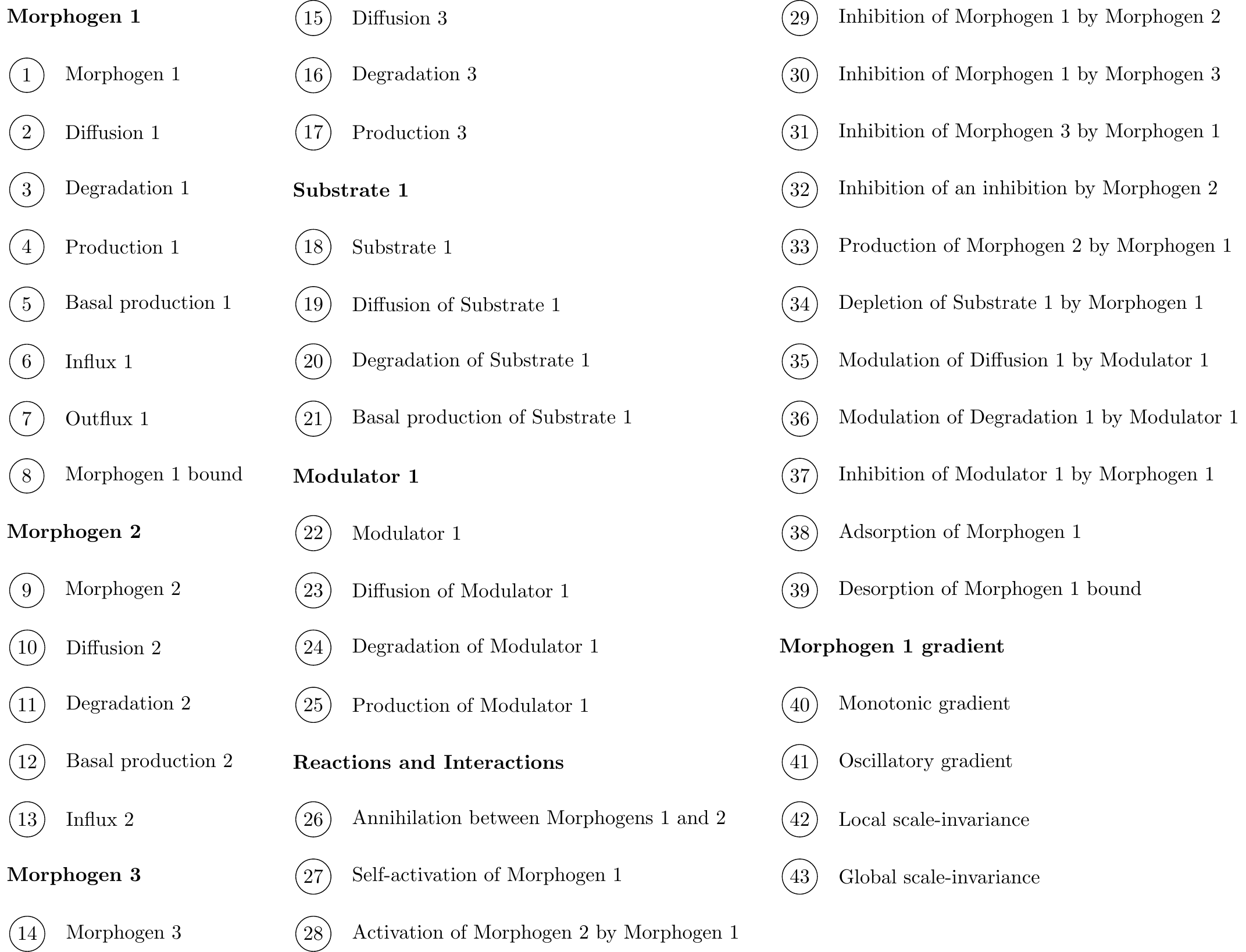}
\caption{{\bf Ordered set of components for the four Turing-pattern and five positional-information models.}}
\label{fig:Fig3}
\end{figure}

To construct the simplicial representation of each Turing-pattern and positional-information model we first specify the 0-simplices, which represent the model components, and the 1-simplices, which represent direct interconnections between the components. While the 0- and 1-simplices are specified by the model, to give a combinatorial graph, the higher-dimensional simplices are obtained by forming cliques \cite{Aktas2019}, where possible, incrementally in dimensions 2 and higher. These higher-dimensional simplices indicate higher-dimensional interactions between the corresponding model components.

To illustrate the barcode representation, we use a particular flat filtration called the shortlex filtration, which we first describe. Note that while all flat filtrations can be extended (see Proposition~\ref{prop:Extension}), an extension of the shortlex filtration is not necessarily another shortlex filtration. We begin by applying the shortlex order to the simplices of the simplicial complex with respect to the assigned vertex order, where the shortlex order is defined as follows \cite[Chapter 0, page 14]{Sipser2013}:
\begin{definition}[\bf{Shortlex order}]\label{def:ShortlexOrder}
Let $\A$ be a totally-ordered finite set, called the \emph{alphabet}, and let $\W$ be the set of all \emph{words} that are finite sequences of symbols from $\A$. The \emph{shortlex order} on $\W$ orders the words as follows:
\begin{enumerate}[itemsep=5pt,topsep=5pt]
\item Two different words in $\W$ with equal length are ordered according to the alphabetic order of $\A$, therefore lexicographically.
\item For two words with unequal lengths, the shorter word precedes the longer word.
\end{enumerate}
\end{definition}
Note that, since $\A$ is totally-ordered, the shortlex order is also a total order. Applying the shortlex order to a collection of simplicies, we first order the simplices by increasing dimension, and then the simplices of the same dimension are ordered lexicographically. Since the set of vertices is totally ordered as specified by $\Ord$, so is the shortlex order for the simplicies.

We now define the \emph{shortlex filtration}:
\begin{definition}[\bf{Shortlex filtration}]\label{def:ShortlexFilt}
Let $\C$ be the set of all components that appear in the collection of models under consideration for comparison, and let the reference complex $\RC^{(m)}$ have the shortlex ordering. Define the bijective weight function $w \colon \RC^{(m)} \to \big[|\RC^{(m)}|\big]$ by $w(\sigma) = j$, where $j$ is the index of $\sigma$ in the shortlex ordering for $\RC^{(m)}$. The \emph{shortlex filtration} of $\RC^{(m)}$ is the nested sequence of subcomplexes as defined in Definition~\ref{def:FlatFilt}.
\end{definition}
{With this filtration it is possible to define the distances of the models that we have described under the umbrella of the reference complex $\RC^{(m)}$.} {Note that, since our definition of distance is invariant with respect to changes in the flat filtration (see Lemma~\ref{lemma:Invariance}), the components indicated in Figure~\ref{fig:Fig3} can be given an arbitrary ordering, resulting in an alternative shortlex filtration.}

Here we describe one positional-information model and one Turing-pattern model, and provide the descriptions of the four additional positional-information models and three additional Turing-pattern models in the Electronic Supplementary Material document. We begin by describing the positional-information annihilation model.

\subsubsection{Positional-information annihilation model}
The annihilation model, and also the opposing gradients model (Electronic Supplementary Material), are mechanisms whereby two opposing morphogen gradients provide size information for developmental patterning. In the annihilation model, the target gene responds to the concentration of Morphogen 1, to which Morphogen 2 irreversibly binds and thereby inhibits the action of Morphogen 1 on activity of transcription, so that the gradient of Morphogen 2 provides size information to the concentration field of Morphogen 1 \cite{McHale2006}.

The sources of each morphogen are at opposite ends of the domain, and the morphogens interact by an annihilation reaction with rate $k$ that results in global scale-invariant patterning \cite{Ben_Naim1992,McHale2006}. Mathematically, the two morphogen gradients with concentrations $m(\bf{x},t)$ and $c(\bf{x},t)$ can be modelled as
\begin{align}\label{eq:PI4}
\frac{\partial m}{\partial t} &= D_m \nabla^2 m - k_m m - kmc,\\
\frac{\partial c}{\partial t} &= D_c \nabla^2 c - k_c c - kmc,
\end{align}
where $D_m$ and $D_c$ are diffusivities, and $k_m$ and $k_c$ are degradation rates. The boundary conditions for each morphogen are Neumann at both boundaries, with influx at the source and zero flux at the opposite boundary.

For the simplicial representation of the annihilation model, the vertices and corresponding model components are:\\[0.1cm]
\begin{minipage}[t]{0.4\textwidth}
	\begin{itemize}[leftmargin=*]
		\item $v_1 \longleftrightarrow \text{Morphogen 1}$
		\item $v_2 \longleftrightarrow \text{Diffusion 1}$
		\item $v_3 \longleftrightarrow \text{Degradation 1}$
		\item $v_6 \longleftrightarrow \text{Influx 1}$
		\item $v_9 \longleftrightarrow \text{Morphogen 2}$
		\item $v_{10} \longleftrightarrow \text{Diffusion 2}$
	\end{itemize}
\end{minipage}%
\begin{minipage}[t]{0.5\textwidth}
	\begin{itemize}[leftmargin=*]
		\item $v_{11} \longleftrightarrow \text{Degradation 2}$
		\item $v_{13} \longleftrightarrow \text{Influx 2}$
		\item $v_{26} \longleftrightarrow \text{Annihilation between Morphogens 1 and 2}$
		\item $v_{40} \longleftrightarrow \text{Monotonic gradient}$
		\item $v_{43} \longleftrightarrow \text{Global scale-invariance}$
	\end{itemize}
\end{minipage}
\vskip.5\baselineskip

For the annihilation model, the simplicial complex and corresponding persistence barcode is shown in Figure~\ref{fig:Fig4}(a)--(b). Note that the simplicial complex is 4-dimensional, however we only show the 1-skeleton of the simplicial complex for simplicity.
\begin{figure}[!ht]
\centering\includegraphics[width=1\textwidth]{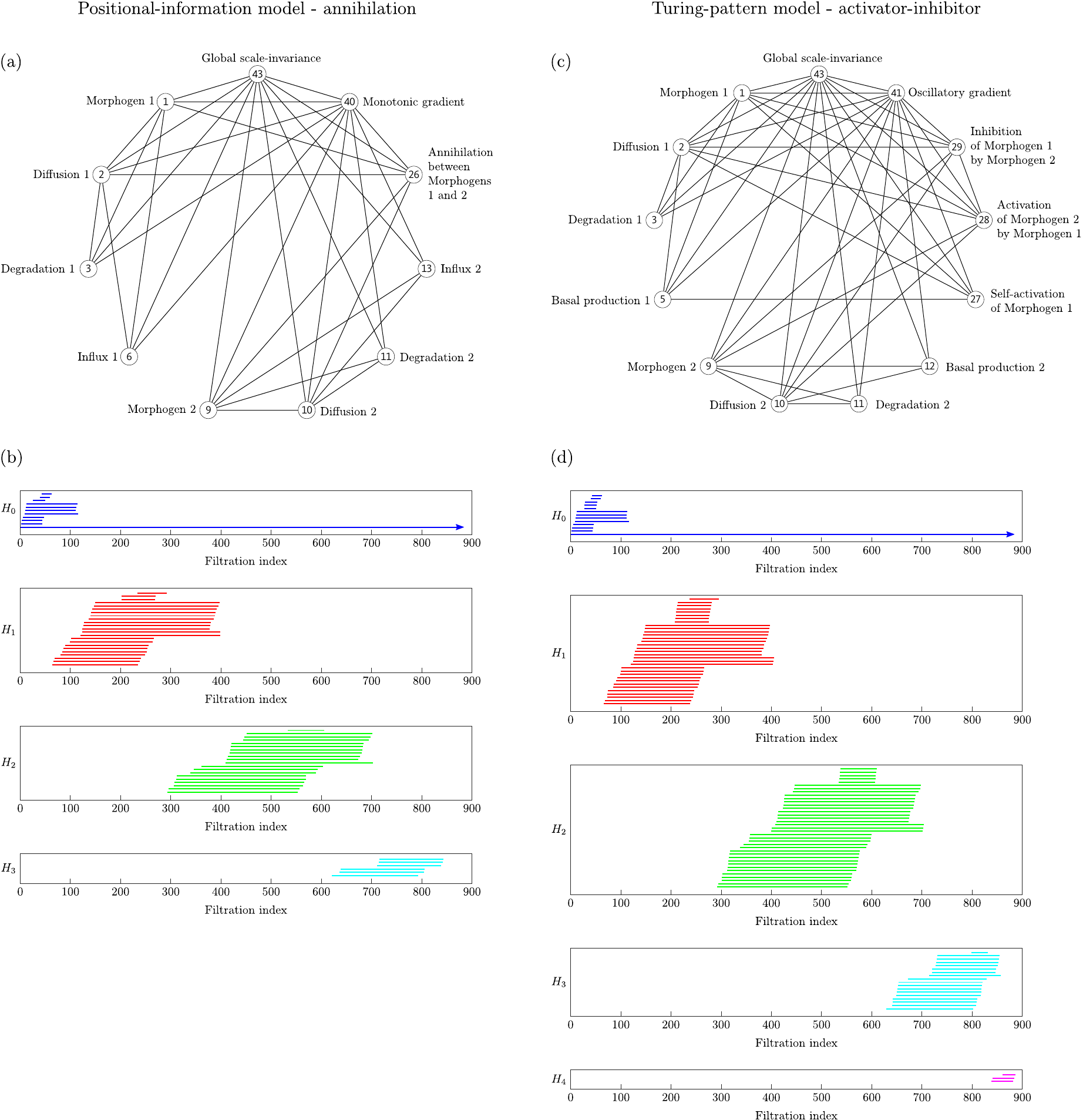}
\caption{{\bf Simplicial-complex representations and corresponding persistence barcodes.} Positional-information annihilation model: (a) 1-skeleton of the 4-dimensional simplicial representation; (b) Persistence barcode. Turing-pattern activator-inhibitor model: (c) 1-skeleton of the 5-dimensional simplicial representation; (d) Persistence barcode. Note that the $H_n$ labels on the persistence barcodes refer to the $n$-dimensional homology classes, and the arrows at the right endpoints of certain persistence intervals indicate that the right endpoints are $+ \infty$, corresponding to homology classes that are not annihilated in the filtration.}
\label{fig:Fig4}
\end{figure}

We now describe the Turing-pattern activator-inhibitor model.

\subsubsection{Turing-pattern activator-inhibitor model}
The activator-inhibitor system \cite{Gierer1972,Meinhardt2012,Landge2020} consists of two diffusible morphogens, an autocatalytic activator with concentration $m(\bf{x},t)$ and a rapidly diffusing inhibitor with concentration $c(\bf{x},t)$. This model can be represented mathematically as
\begin{align}\label{eq:T1}
\frac{\partial m}{\partial t} &= D_m \nabla^2 m + \frac{\rho m^2}{c(1+\mu_m m^2)} - k_m m + \rho_m,\\
\frac{\partial c}{\partial t} &= D_c \nabla^2 c + \rho m^2 - k_c c + \rho_c,
\end{align}
where $D_m$ and $D_c$ are diffusivities, $\rho_m$ and $\rho_c$ are basal production rates, $k_m$ and $k_c$ are degradation rates, and $\mu_m$ is a saturation constant. The parameter $\rho$ is the \emph{source density}, which measures the general ability of the cells to perform the autocatalytic reaction. The patterning arises through local self-enhancement of the activator, activation of the inhibitor, and long-range inhibition of the activator. We assume that the boundary conditions are zero flux at both boundaries.

For the simplicial representation of the activator-inhibitor model, the vertices and corresponding model components are:\\[0.1cm]
\begin{minipage}[t]{0.4\textwidth}
	\begin{itemize}[leftmargin=*]
		\item $v_1 \longleftrightarrow \text{Morphogen 1}$
		\item $v_2 \longleftrightarrow \text{Diffusion 1}$
		\item $v_3 \longleftrightarrow \text{Degradation 1}$
		\item $v_5 \longleftrightarrow \text{Basal production 1}$
		\item $v_9 \longleftrightarrow \text{Morphogen 2}$
		\item $v_{10} \longleftrightarrow \text{Diffusion 2}$
		\item $v_{11} \longleftrightarrow \text{Degradation 2}$
	\end{itemize}
\end{minipage}%
\begin{minipage}[t]{0.5\textwidth}
	\begin{itemize}[leftmargin=*]
		\item $v_{12} \longleftrightarrow \text{Basal production 2}$
		\item $v_{27} \longleftrightarrow \text{Self-activation of Morphogen 1}$
		\item $v_{28} \longleftrightarrow \text{Activation of Morphogen 2 by Morphogen 1}$
		\item $v_{29} \longleftrightarrow \text{Inhibition of Morphogen 1 by Morphogen 2}$
		\item $v_{41} \longleftrightarrow \text{Oscillatory gradient}$
		\item $v_{43} \longleftrightarrow \text{Global scale-invariance}$
	\end{itemize}
\end{minipage}
\vskip.5\baselineskip

For the activator-inhibitor model, the simplicial complex and corresponding persistence barcode is shown in Figure~\ref{fig:Fig4}(c)--(d). Note that the simplicial complex is 5-dimensional, however we only show the 1-skeleton of the simplicial complex for simplicity.

\subsubsection{Distances between models}
We now apply Theorem~\ref{thrm:metrics} to calculate the distances between the simplicial representations of the five positional-information and four Turing-pattern models. These distances, shown in Table~\ref{tab:Distances}, indicate the total number of labelled simplices that must be added and subtracted to transform one labelled simplicial complex associated with a model to the labelled simplicial complex of another model.
\begin{table}[!h]
\caption{Distances between the four Turing-pattern (TP) and five positional-information (PI) models.}
\label{tab:Distances}
\begin{tabular}{lccccccccc}
\hline
& PI 2 & PI 3 & PI 4 & PI 5 & TP 1 & TP 2 & TP 3 & TP 4 \\
\hline
PI 1 (linear gradient) & 40 & 108 & 104 & 112 & 240 & 176 & 376 & 340 \\
PI 2 (synthesis-diffusion-degradation) &  & 68 & 96 & 104 & 216 & 152 & 352 & 316 \\
PI 3 (opposing gradients) & & & 120 & 172 & 270 & 220 & 406 & 370 \\
PI 4 (annihilation) & & & & 152 & 268 & 232 & 404 & 368 \\
PI 5 (active modulation) & & & & & 304 & 240 & 440 & 404 \\
TP 1 (activator-inhibitor) & & & & & & 192 & 432 & 100 \\
TP 2 (substrate depletion) & & & & & & & 424 & 292 \\
TP 3 (inhibition of an inhibition) & & & & & & & & 532 \\
TP 4 (modulation) & & & & & & & & \\
\hline
\end{tabular}
\vspace*{-4pt}
\end{table}

In Table~\ref{tab:Simplicies} we also show the number of simplices in each simplicial representation of the four Turing-pattern (TP) and five positional-information (PI) models.
\begin{table}[!h]
\caption{Number of $n$-dimensional simplices in the labelled simplicial representations of the four Turing-pattern (TP) and five positional-information (PI) models.}
\label{tab:Simplicies}
\begin{tabular}{lcccccc}
\hline
$n$ & 0 & 1 & 2 & 3 & 4 & 5 \\
\hline
PI 1 (linear gradient) & 6 & 14 & 16 & 9 & 2 & 0 \\
PI 2 (synthesis-diffusion-degradation) & 5 & 9 & 7 & 2 & 0 & 0 \\
PI 3 (opposing gradients) & 10 & 27 & 32 & 18 & 4 & 0\\
PI 4 (annihilation) & 11 & 33 & 43 & 26 & 6 & 0\\
PI 5 (active modulation) & 13 & 39 & 47 & 24 & 4 & 0\\
TP 1 (activator-inhibitor) & 13 & 45 & 70 & 55 & 21 & 3\\
TP 2 (substrate depletion) & 12 & 37 & 50 & 33 & 10 & 1\\
TP 3 (inhibition of an inhibition) & 17 & 70 & 119 & 96 & 36 & 5\\
TP 4 (modulation) & 16 & 65 & 108 & 84 & 30 & 4\\
\hline
\end{tabular}
\vspace*{-4pt}
\end{table}

We observe that the distance between two models can be relatively large even when the models have similar numbers of simplices of each dimension. For example, the Turing-pattern inhibition-of-an-inhibition model (TP 3) and the Turing-pattern modulation model (TP 4) have similar numbers of simplices in each dimension, however they have the largest distance amongst the four Turing-pattern models that we compare. This illustrates that the metric accounts for not just the numbers of simplicies but also for the labelling of the simplicies.

Further, our measure of distance between models is able to reveal when two models are relatively similar, for example the Turing-pattern modulation model (TP 4) is a modification of the Turing-pattern activator-inhibitor (TP 1) model, and these two models have the smallest distance amongst the four Turing-pattern models that we compare.

\subsubsection{Equivalence of models}
We now apply our notion of model equivalence to compare the positional-information annihilation model with the Turing-pattern activator-inhibitor model. The 1-skeletons of the simplicial representations of the annihilation model and the activator-inhibitor model are shown in Figure~\ref{fig:Fig4}(a) and (c), respectively. We shall demonstrate the equivalence of the two models by applying Operations 1, 3, and 5 to the Turing-pattern activator-inhibitor model to obtain the positional-information annihilation model. Note that, since the three operations are invertible, we could similarly apply operations to the positional-information annihilation model to obtain the Turing-pattern activator-inhibitor model. In accordance with the admissibility requirements for the operations, we regard as equivalent the subsets of model components indicated in Figure~\ref{fig:Fig5}.
\begin{figure}[!h]
\centering\includegraphics[width=1\textwidth]{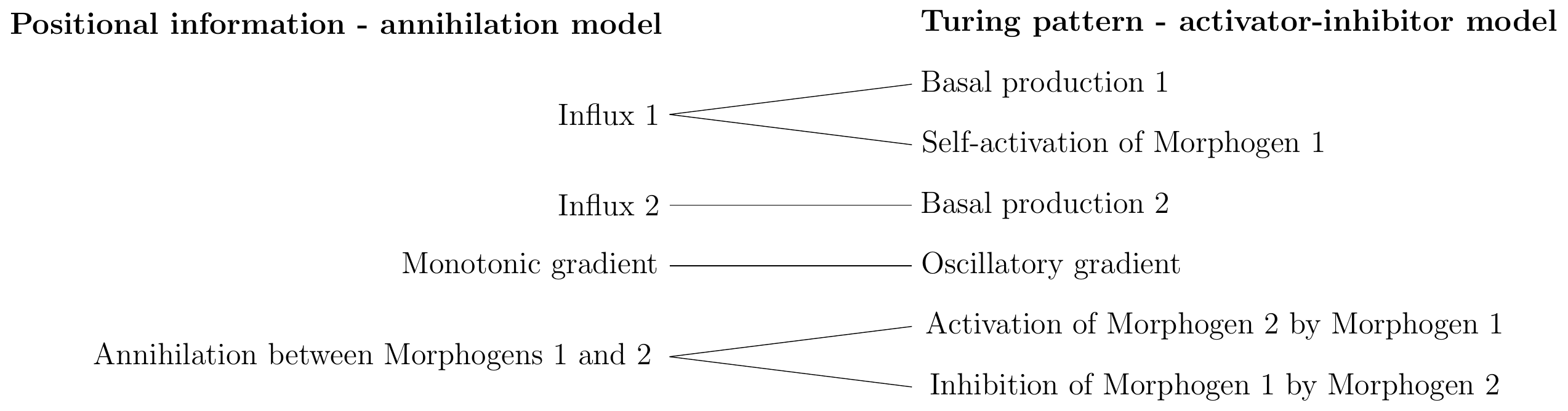}
\caption{{\bf Model components considered as equivalent.}}
\label{fig:Fig5}
\end{figure}
We consider `Influx 1' to be equivalent to the combination of `Basal production 1' and `Self-activation of Morphogen 1' since they are all a source of `Morphogen 1'. Similarly, `Influx 2' is equivalent to `Basal production 2' since they are both a source of `Morphogen 2'. Since we are interested in the existence of the pattern-forming gradient and not the particular profile of the gradient, we consider `Monotonic gradient' and `Oscillatory gradient' as equivalent. Finally, `Annihilation between Morphogens 1 and 2', which reduces the concentrations of both morphogens, is equivalent to the combination of `Activation of Morphogen 2 by Morphogen 1' and `Inhibition of Morphogen 1 by Morphogen 2'. Indeed, `Activation of Morphogen 2 by Morphogen 1' and `Inhibition of Morphogen 1 by Morphogen 2' form an inhibitory cycle whereby `Morphogen 2' inhibits `Morphogen 1', which in turn reduces the activation of `Morphogen 2' by `Morphogen 1'. Note that these observed equivalences between model components do not suggest that equivalent components are the same, but rather that equivalent components have similar roles in the conceptual frameworks of the corresponding models, which is demonstrated formally through application of the admissible operations on simplicial complexes to ascertain that the models are in the same equivalence class.

We now show that the positional-information annihilation model and the Turing-pattern activator-inhibitor model are in the same equivalence class. While we describe the application of the operations on vertices, there are also changes to the higher-dimensional simplices in the simplicial complex. Note that it is easy to automate the process on computer to transform one simplicial complex to another by application of the operations on simplicial complexes. First consider transforming the complex for the Turing-pattern activator-inhibitor model to the complex for the positional-information annihilation model. Perform an adjacent-vertex identification of the two vertices `Basal production 1' and `Self-activation of Morphogen 1' to obtain the new vertex `Influx 1'. The vertex `Basal production 2' is substituted with the vertex `Influx 2'. Similarly, we substitute the vertex `Monotonic gradient' for the vertex `Oscillatory gradient'. Finally, we perform an adjacent-vertex identification of the two vertices `Activation of Morphogen 2 by Morphogen 1' and `Inhibition of Morphogen 1 by Morphogen 2' to obtain the vertex `Annihilation between Morphogens 1 and 2'. The resulting simplicial complex is that which represents the positional-information annihilation model. 

Conversely, to transform the complex for the positional-information annihilation model to the complex for the Turing-pattern activator-inhibitor model we apply the inverse operations. First perform a vertex split of the vertex `Annihilation between Morphogens 1 and 2' to obtain the two vertices `Activation of Morphogen 2 by Morphogen 1' and `Inhibition of Morphogen 1 by Morphogen 2'. Substitute the vertex `Oscillatory gradient' for the vertex `Monotonic gradient', and then substitute the vertex `Basal production 2' for the vertex `Influx 2'. Finally, apply a vertex split of the vertex `Influx 1' to obtain the two vertices `Basal production 1' and `Self-activation of Morphogen 1'. The resulting simplicial complex is that which represents the Turing-pattern activator-inhibitor model. Since we have only used admissible and invertible operations to transform the simplicial representation of the Turing-pattern model to the simplicial representation of the positional-information model, the two models are in the same equivalence class, and therefore are equivalent. We can also compare the persistence barcodes shown in Figure~\ref{fig:Fig4}(b) and (d), which have a high degree of similarity in accordance with the established equivalence between the positional-information annihilation model and the Turing-pattern activator-inhibitor model.

This analysis demonstrates that the Turing-pattern activator-inhibitor model and the positional-information annihilation model, each one of the main models for their patterning mechanism in developmental biology, are in fact closely related in terms of structure and mechanism. { This is not obvious given the nature of the proposed models (e.g. \cite{Scholes2019} and  \cite{Schaerli2014}), has not previously been reported, and is unexpected given the frequent historical acrimony between proponents of the two mechanisms \cite{Green2015}}. The main difference between the positional-information model and the Turing-pattern model is the source of the gradient-forming morphogen, whereby the positional-information mechanism takes advantage of an influx of morphogen from outside the patterning domain, whereas for the Turing-pattern mechanism the morphogen is produced uniformly within the domain. The morphogen influx naturally produces a morphogen gradient in the positional-information mechanism, so there is no requirement for specialised dynamics. In contrast, the Turing-pattern mechanism requires a Turing instability to generate an oscillating gradient from the spatially-uniform production of the morphogen, requiring very particular conditions for existence.

It is important to note here that very few models would be equivalent with respect to our strict definition of model equivalence. Indeed, with the various model components and their interconnections, along with higher-dimensional interactions, it is unlikely that two simplicial representations of two models could be transformed into each other by application of admissible operations. Our established equivalence of the positional-information annihilation model and the Turing-pattern activator-inhibitor model, with only a few admissible operations required, therefore demonstrates that the two models are conceptually very similar from our considered perspective. This is further substantiated by the ability to employ adjacent-vertex identification operations, or conversely vertex-split operations, which require identical conceptual interconnections for the model concepts involved.

\section{Conclusion} \label{sec:Conclusion}
In this article we present a new methodology for comparing models based on the relationships between various model aspects. Compared to some previous attempts at comparing model structures \cite{Scholes2019} our approach is readily automatable, and thus able to meet the demands of large-scale modelling attempts and model-curation projects \cite{Malik_Sheriff2019}.

{Representing models as labelled simplicial complexes allows us to determine meaningful distances between models, and persistent homology provides an alternative and often simplified representation of the models.} In addition to a measure of distance, we have also developed and applied the concept of equivalence to compare model features. Model equivalence, { as developed here}, gives more nuanced insights into the relationships between different models and allows us to detect nontrivial similarities between mathematical models. {Our analysis of the Turing-pattern activator-inhibitor model and the positional-information annihilation model shows that these two models are more similar than had previously been suggested \cite{Green2015}. Given the conditions outlined in the discussion they are in fact equivalent, something that we had not expected and which to our knowledge has not been seen or considered before. This is one example demonstrating the potential insights that can be gained from our formalism for model comparison.}

We foresee more need for this in the immediate future: modelling is increasing in importance in the life and biomedical sciences, yet remains to be fallible and often poorly grounded in reality \cite{Gunawardena2014,Stumpf2020}. The ability to compare, contrast, reconcile, and triage potentially large sets of models will aid in making mathematical modelling more helpful in biology.

While we have applied our methodology to similar types of models, namely systems of reaction-diffusion equations, another important aspect of our formalism is that we can compare different models, irrespective of how distinct they are in form and size: by identifying the model components and their interconnections we can represent any model as a simplicial complex which allows for direct comparison with any other simplicial representation of a model. Identifying differences, but also highlighting similarities in model structures, can aid our understanding of scientific problems. For example, in the particular example of Turing-pattern versus positional-information mechanisms this approach can really resolve long-standing problems and trigger the search for a more synthetic approach \cite{Green2015,Scholes2019}. Because of its flexibility and rigorous grounding our methodology is universally applicable to all models.

\section*{Code availability}
The code (in both Julia and MATLAB languages) for our model-comparison methodology which calculates the distances between both the simplicial complexes and the persistence intervals is available on GitHub at:
https://github.com/DrSeanTVittadello/ModelComparison2021

\section*{Author contributions}
STV and MPHS conceived and planned this analysis; STV conducted the research, performed the analysis, and drafted the manuscript; all authors reviewed, edited, and approved the final version.

\section*{Funding}
The authors gratefully acknowledge funding through a ``Life?'' programme grant from the Volkswagen Stiftung. MPHS is funded through the University of Melbourne Driving Research Momentum program.

\section*{Acknowledgments}
We thank members of the Theoretical Systems Biology group at the University of Melbourne and Imperial College London, Heike Siebert (FU Berlin), James Briscoe (The Francis Crick Institute), and Mark Isalan (Imperial College London) for helpful discussions on Turing patterns and model discrimination.

\clearpage

\newcommand{\noop}[1]{}

\newpage

\includepdf[pages=-]{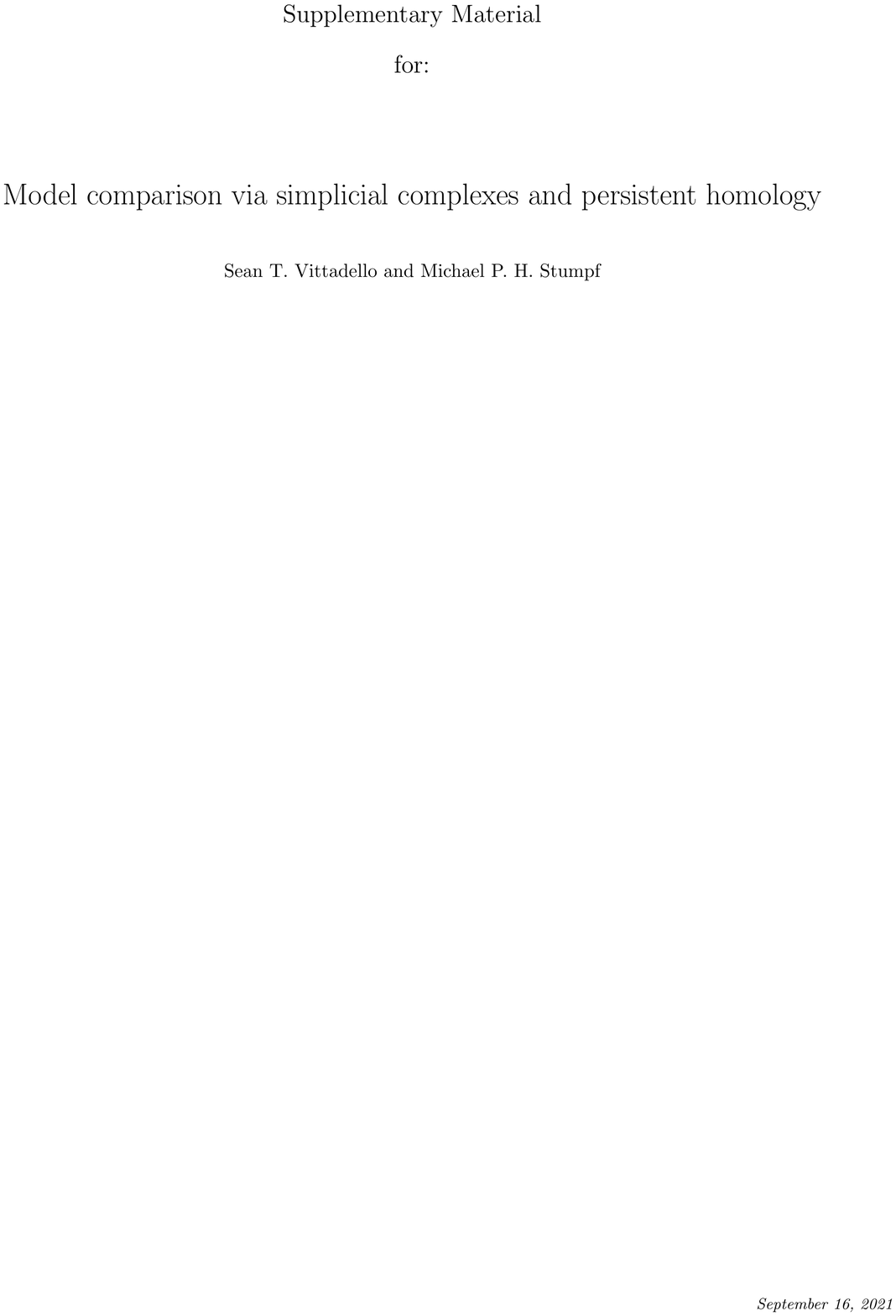}


\begin{thebibliography}{10}

\bibitem{Rosenblueth1945}
Rosenblueth A, Wiener N.
\newblock The role of models in science.
\newblock \emph{Philosophy of Science}. 1945;12:316--321.
\newblock doi:10.1086/286874.

\bibitem{Tomlin2007}
Tomlin CJ, Axelrod JD.
\newblock Biology by numbers: mathematical modelling in developmental biology.
\newblock \emph{Nature Reviews Genetics}. 2007;8:331--340.
\newblock doi:10.1038/nrg2098.

\bibitem{Sneddon2010}
Sneddon MW, Faeder JR, Emonet T.
\newblock Efficient modeling, simulation and coarse-graining of biological
  complexity with {NFsim}.
\newblock \emph{Nature Methods}. 2010;8(2):177--183.
\newblock doi:10.1038/nmeth.1546.

\bibitem{Pezzulo2016}
Pezzulo G, Levin M.
\newblock Top-down models in biology: explanation and control of complex living
  systems above the molecular level.
\newblock \emph{Journal of The Royal Society Interface}. 2016;13:20160555.
\newblock doi:10.1098/rsif.2016.0555.

\bibitem{Transtrum2016}
Transtrum MK, Qiu P.
\newblock Bridging mechanistic and phenomenological models of complex
  biological systems.
\newblock \emph{{PLOS} Computational Biology}. 2016;12:e1004915.
\newblock doi:10.1371/journal.pcbi.1004915.

\bibitem{Turing1952}
Turing AM.
\newblock The chemical basis of morphogenesis.
\newblock \emph{Philosophical Transactions of the Royal Society of London
  Series B, Biological Sciences}. 1952;237:37--72.
\newblock doi:10.1098/rstb.1952.0012.

\bibitem{Wolpert1969}
Wolpert L.
\newblock Positional information and the spatial pattern of cellular
  differentiation.
\newblock \emph{Journal of Theoretical Biology}. 1969;25:1--47.
\newblock doi:10.1016/S0022-5193(69)80016-0.

\bibitem{Gierer1972}
Gierer A, Meinhardt H.
\newblock A theory of biological pattern formation.
\newblock \emph{Kybernetik}. 1972;12:30--39.
\newblock doi:10.1007/BF00289234.

\bibitem{Hlavacek2006}
Hlavacek WS, Faeder JR, Blinov ML, Posner RG, Hucka M, Fontana W.
\newblock Rules for modeling signal-transduction systems.
\newblock \emph{Science Signaling}. 2006;2006:re6.
\newblock doi:10.1126/stke.3442006re6.

\bibitem{Danos2007}
Danos V, Feret J, Fontana W, Harmer R, Krivine J.
\newblock Rule-based modelling of cellular signalling.
\newblock In: Caires L, Vasconcelos VT, editors. CONCUR 2007 -- Concurrency
  Theory. Springer Berlin Heidelberg; 2007. p. 17--41.
\newblock doi:10.1007/978-3-540-74407-8.

\bibitem{Bachman2011}
Bachman JA, Sorger P.
\newblock New approaches to modeling complex biochemistry.
\newblock \emph{Nature Methods}. 2011;8:130--131.
\newblock doi:10.1038/nmeth0211-130.

\bibitem{Wang2012}
Wang RS, Saadatpour A, Albert R.
\newblock Boolean modeling in systems biology: an overview of methodology and
  applications.
\newblock \emph{Physical Biology}. 2012;9:055001.
\newblock doi:10.1088/1478-3975/9/5/055001.

\bibitem{Wolkenhauer2014}
Wolkenhauer O.
\newblock Why model?
\newblock \emph{Frontiers in Physiology}. 2014;5:21.
\newblock doi:10.3389/fphys.2014.00021.

\bibitem{Tavassoly2018}
Tavassoly I, Goldfarb J, Iyengar R.
\newblock Systems biology primer: the basic methods and approaches.
\newblock \emph{Essays in Biochemistry}. 2018;62:487--500.
\newblock doi:10.1042/EBC20180003.

\bibitem{Stalidzans2020}
Stalidzans E, Zanin M, Tieri P, Castiglione F, Polster A, Scheiner S, et~al.
\newblock Mechanistic modeling and multiscale applications for precision
  medicine: theory and practice.
\newblock \emph{Network and Systems Medicine}. 2020;3:36--56.
\newblock doi:10.1089/nsm.2020.0002.

\bibitem{Le_Novere2015}
Le~Nov{\`{e}}re N.
\newblock Quantitative and logic modelling of molecular and gene networks.
\newblock \emph{Nature Reviews Genetics}. 2015;16:146--158.
\newblock doi:10.1038/nrg3885.

\bibitem{Erguler2011}
Erguler K, Stumpf MPH.
\newblock Practical limits for reverse engineering of dynamical systems: a
  statistical analysis of sensitivity and parameter inferability in systems
  biology models.
\newblock \emph{Molecular {BioSystems}}. 2011;7(5):1593--1602.
\newblock doi:10.1039/C0MB00107D.

\bibitem{Ingram2006}
Ingram PJ, Stumpf MPH, Stark J.
\newblock Network motifs: structure does not determine function.
\newblock \emph{{BMC} Genomics}. 2006 may;7(1):108.
\newblock doi:10.1186/1471-2164-7-108.

\bibitem{Gay2010}
Gay S, Soliman S, Fages F.
\newblock A graphical method for reducing and relating models in systems
  biology.
\newblock \emph{Bioinformatics}. 2010;26:i575--i581.
\newblock doi:10.1093/bioinformatics/btq388.

\bibitem{Schulz2011}
Schulz M, Krause F, Nov{\`{e}}re NL, Klipp E, Liebermeister W.
\newblock Retrieval, alignment, and clustering of computational models based on
  semantic annotations.
\newblock \emph{Molecular Systems Biology}. 2011;7:512.
\newblock doi:10.1038/msb.2011.41.

\bibitem{Kirk2013}
Kirk P, Thorne T, Stumpf MPH.
\newblock Model selection in systems and synthetic biology.
\newblock \emph{Current Opinion in Biotechnology}. 2013;24:767--774.
\newblock doi:10.1016/j.copbio.2013.03.012.

\bibitem{Clark2014}
Clark C, Kalita J.
\newblock A comparison of algorithms for the pairwise alignment of biological
  networks.
\newblock \emph{Bioinformatics}. 2014;30:2351--2359.
\newblock doi:10.1093/bioinformatics/btu307.

\bibitem{Henkel2018}
Henkel R, Hoehndorf R, Kacprowski T, Kn{\"{u}}pfer C, Liebermeister W,
  Waltemath D.
\newblock Notions of similarity for systems biology models.
\newblock \emph{Briefings in Bioinformatics}. 2018;19:77--88.
\newblock doi:10.1093/bib/bbw090.

\bibitem{Ettinger2002}
Ettinger M.
\newblock The complexity of comparing reaction systems.
\newblock \emph{Bioinformatics}. 2002;18:465--469.
\newblock doi:10.1093/bioinformatics/18.3.465.

\bibitem{Deza2014}
Deza MM, Deza E.
\newblock Encyclopedia of Distances.
\newblock 3rd ed. Springer-Verlag New York Inc; 2014.

\bibitem{Toni2011}
Toni T, Jovanovic G, Huvet M, Buck M, Stumpf MPH.
\newblock From qualitative data to quantitative models: analysis of the phage
  shock protein stress response in {E}scherichia coli.
\newblock \emph{{BMC} Systems Biology}. 2011;5:69.
\newblock doi:10.1186/1752-0509-5-69.

\bibitem{Edelsbrunner2002}
Edelsbrunner H, Letscher D, Zomorodian A.
\newblock Topological persistence and simplification.
\newblock \emph{Discrete {\&} Computational Geometry}. 2002;28:511--533.
\newblock doi:10.1007/s00454-002-2885-2.

\bibitem{Zomorodian2005}
Zomorodian A, Carlsson G.
\newblock Computing persistent homology.
\newblock \emph{Discrete {\&} Computational Geometry}. 2005;33:249--274.
\newblock doi:10.1007/s00454-004-1146-y.

\bibitem{Otter2017}
Otter N, Porter MA, Tillmann U, Grindrod P, Harrington HA.
\newblock A roadmap for the computation of persistent homology.
\newblock \emph{{EPJ} Data Science}. 2017;6:17.
\newblock doi:10.1140/epjds/s13688-017-0109-5.

\bibitem{Green2015}
Green JBA, Sharpe J.
\newblock Positional information and reaction-diffusion: two big ideas in
  developmental biology combine.
\newblock \emph{Development}. 2015;142:1203--1211.
\newblock doi:10.1242/dev.114991.

\bibitem{Murray2002}
Murray JD.
\newblock Mathematical Biology. vol. I: An Introduction.
\newblock Third edition ed. Springer New York; 2002.
\newblock doi:10.1007/b98868.

\bibitem{Edelsbrunner2014}
Edelsbrunner H.
\newblock A Short Course in Computational Geometry and Topology.
\newblock Springer International Publishing; 2014.
\newblock doi:10.1007/978-3-319-05957-0.

\bibitem{Rotman1988}
Rotman JJ.
\newblock An Introduction to Algebraic Topology.
\newblock Springer New York; 1988.
\newblock doi:10.1007/978-1-4612-4576-6.

\bibitem{Munkres2018}
Munkres J.
\newblock Elements of Algebraic Topology.
\newblock {CRC} Press; 2018.
\newblock doi:10.1201/9780429493911.

\bibitem{Cleland1963}
Cleland WW.
\newblock The kinetics of enzyme-catalyzed reactions with two or more
  substrates or products: I. Nomenclature and rate equations.
\newblock \emph{Biochimica et Biophysica Acta ({BBA}) - Specialized Section on
  Enzymological Subjects}. 1963;67:104--137.
\newblock doi:10.1016/0926-6569(63)90211-6.

\bibitem{Aktas2019}
Aktas ME, Akbas E, Fatmaoui AE.
\newblock Persistence homology of networks: methods and applications.
\newblock \emph{Applied Network Science}. 2019;4:61.
\newblock doi:10.1007/s41109-019-0179-3.

\bibitem{Sipser2013}
Sipser M.
\newblock Introduction to the Theory of Computation.
\newblock 3rd ed. Cengage Learning; 2013.

\bibitem{McHale2006}
McHale P, Rappel WJ, Levine H.
\newblock Embryonic pattern scaling achieved by oppositely directed morphogen
  gradients.
\newblock \emph{Physical Biology}. 2006;3:107--120.
\newblock doi:10.1088/1478-3975/3/2/003.

\bibitem{Ben_Naim1992}
Ben-Naim E, Redner S.
\newblock Inhomogeneous two-species annihilation in the steady state.
\newblock \emph{Journal of Physics A}. 1992;25:L575--L583.
\newblock doi:10.1088/0305-4470/25/9/012.

\bibitem{Meinhardt2012}
Meinhardt H.
\newblock Turing{\textquotesingle}s theory of morphogenesis of 1952 and the
  subsequent discovery of the crucial role of local self-enhancement and
  long-range inhibition.
\newblock \emph{Interface Focus}. 2012;2:407--416.
\newblock doi:10.1098/rsfs.2011.0097.

\bibitem{Landge2020}
Landge AN, Jordan BM, Diego X, M{\"{u}}ller P.
\newblock Pattern formation mechanisms of self-organizing reaction-diffusion
  systems.
\newblock \emph{Developmental Biology}. 2020;460:2--11.
\newblock doi:10.1016/j.ydbio.2019.10.031.

\bibitem{Scholes2019}
Scholes NS, Schnoerr D, Isalan M, Stumpf MPH.
\newblock A comprehensive network atlas reveals that {T}uring patterns are
  common but not robust.
\newblock \emph{Cell Systems}. 2019;9:243--257.
\newblock doi:10.1016/j.cels.2019.07.007.

\bibitem{Schaerli2014}
Schaerli Y, Munteanu A, Gili M, Cotterell J, Sharpe J, Isalan M.
\newblock A unified design space of synthetic stripe-forming networks.
\newblock \emph{Nature Communications}. 2014;5:4905.
\newblock doi:10.1038/ncomms5905.

\bibitem{Malik_Sheriff2019}
Malik-Sheriff RS, Glont M, Nguyen TVN, Tiwari K, Roberts MG, Xavier A, et~al.
\newblock {BioModels} {\textemdash} 15 years of sharing computational models in
  life science.
\newblock \emph{Nucleic Acids Research}. 2019;48:D407--D415.
\newblock doi:10.1093/nar/gkz1055.

\bibitem{Gunawardena2014}
Gunawardena J.
\newblock Models in biology: `accurate descriptions of our pathetic thinking'.
\newblock \emph{{BMC} Biology}. 2014;12:29.
\newblock doi:10.1186/1741-7007-12-29.

\bibitem{Stumpf2020}
Stumpf MPH.
\newblock Multi-model and network inference based on ensemble estimates:
  avoiding the madness of crowds.
\newblock \emph{Journal of The Royal Society Interface}. 2020;17:20200419.
\newblock doi:10.1098/rsif.2020.0419.

\end{thebibliography}
\end{document}